\numberwithin{equation}{section}
\def\ca{{\mathcal A}}
\def\cb{{\mathcal B}}
\def\cf{{\mathcal F}}
\def\cg{{\mathcal G}}
\def\ch{{\mathcal H}}
\def\ck{{\mathcal K}}
\def\cam{{\mathcal M}}
\def\car{{\mathcal R}}
\def\a{\alpha}
\def\g{\gamma}  
\def\d{\delta}  
\def\eps{\varepsilon}
\def\l{\lambda} 
\def\m{\mu}
\def\n{\nu}
\def\r{\rho}
\def\s{\sigma}
\def\om{\omega} \def\Om{\Omega}
\def\gf{{\mathfrak F}}
\def\gpg{{\mathfrak g}}
\def\bc{{\mathbb C}}
\def\bn{{\mathbb N}}
\def\br{{\mathbb R}}
\def\bz{{\mathbb Z}}
\def\bfu{{\mathbf u}}
\newtheorem{thm}{Theorem}[section]
\newtheorem{lem}[thm]{Lemma}
\newtheorem{prop}[thm]{Proposition}
\theoremstyle{definition}
\newtheorem{rem}[thm]{Remark}
\def\dim{\mathop{\rm dim}}
\def\min{\mathop{\rm min}}
\def\supp{\mathop{\rm supp}}
\def\Im{\mathop{\rm Im}}
\def\sp{\mathop{\rm sp}}
\def\supp{\mathop{\rm supp}}
\def\di{\mathop{\rm d}\!}
\def\di{{\rm d}}
\def\sp{\mathop{\rm span}}
\definecolor{ao(english)}{rgb}{0.0, 0.5, 0.0}
\definecolor{auburn}{rgb}{0.43, 0.21, 0.1}
\definecolor{aquamarine}{rgb}{0.5, 1.0, 0.83}
\begin{document}

\title[distributions for nonsymmetric monotone operators]{distributions for nonsymmetric monotone and weakly monotone
position operators}
\author{Vitonofrio Crismale}
\address{Vitonofrio Crismale\\
Dipartimento di Matematica\\
Universit\`{a} degli studi di Bari\\
Via E. Orabona, 4, 70125 Bari, Italy}
\email{\texttt{vitonofrio.crismale@uniba.it}}
\author{Maria Elena Griseta}
\address{Maria Elena Griseta\\
Dipartimento di Matematica\\
Universit\`{a} degli studi di Bari\\
Via E. Orabona, 4, 70125 Bari, Italy}
\email{\texttt{maria.griseta@uniba.it}}
\author{Janusz Wysocza\'nski}
\address{Janusz Wysocza\'nski\\
Institute of Mathematics\\
Wroclaw University\\
pl. Grunwaldzki 2/4, 50-384 Wroclaw, Poland}
\email{{\texttt{jwys@math.uni.wroc.pl}}}

\date{}

\begin{abstract} We study the vacuum distribution, under an appropriate scaling, of a family of partial sums of nonsymmetric position operators on weakly monotone and monotone Fock spaces, respectively. We preliminary treat the case of weakly monotone Fock space, and show that any single operator has the vacuum law belonging to the free Meixner class. After establishing some relations between the combinatorics of Motzkin and Riordan paths, we give a recursive formula for the vacuum moments of the law of any finite sum. Since the operators are monotone independent, the distribution is the monotone convolution of the free Meixner law above. We also investigate the asymptotic measure for these sums, which can be seen as "Poisson type" limit law. It turns out to belong to the free Meixner class, with an atomic and an absolutely continuous part (w.r.t. the Lebesgue measure). Finally, we briefly apply analogous considerations to the case of monotone Fock space.

\vskip0.1cm\noindent \\
{\bf Mathematics Subject Classification}: 46L53, 60F05, 60B99, 05A18 \\
{\bf Key words}: noncommutative probability, weakly monotone Fock space, noncrossing labeled partitions, combinatorics, Motzkin and Riordan paths, Poisson Limit Theorem.\\
\end{abstract}

\maketitle

\section{introduction}
\label{intro}
The present notes are a continuation of the investigation started by the authors in \cite{CGW}. There we studied the vacuum distribution of sums of symmetric position operators on the weakly monotone Fock space (WM-Fock for short), based on a separable Hilbert space $\ch$. The structure of WM-Fock was first defined in \cite{JWys2005}, where the interested reader is referred for a general presentation. A symmetric position operator is obtained by adding the basic operators, namely the creators $A_i^{\dagger}$ and annihilators $A_i$ by the \textit{test functions} given by unit vectors $e_i$ on $\ch$ (details are explained in Section 2), and in noncommutative (free, monotone or boolean) probability it is often called \textit{gaussian}. One of the reasons for such a name can be traced back to the notions of stochastic independence in the noncommutative realm. Indeed, when one drops commutativity, several inequivalent notions of independence appear among the random variables \cite{S,Mur3}, and consequently the appropriate central limit theorems give rise to limit laws different from the normal distribution. The operators above are therefore called gaussian since they realize, in suitable Hilbert spaces, natural examples of noncommutative random variables whose distributions, w.r.t. a distinguished vector state, are central limit laws. In fact, it was observed by Hudson and Parthasarathy \cite{HP} that the classical Brownian motion can be realized on the \textit{symmetric Fock space} as the sum of creation and annihilation processes. In particular, the distribution of such operators w.r.t. the vacuum state is the gaussian law. This idea turned out to work also in free probability, where the sum of creation and annihilation operators acting on the \emph{free (full) Fock space}, is the Wigner semicircle law, which is the central limit distribution for freely independent random variables. Similarly, Muraki \cite{Mur1997} has constructed Brownian motion with monotone independent increments on the (\emph{continuous} type) \emph{monotone Fock space} as the sum of creation and annihilation processes, where every random variable in the family has the arcsine law. The latter is the central limit distribution in monotone probability, and a generalization of Muraki's construction has been obtained in \cite{KW} for Brownian motions with multidimensional time parameter taken from a positive symmetric cone and with bm-independent increments.

On the other hand, the sum of creation and annihilation operators on the \emph{discrete monotone Fock} space has just the Bernoulli distribution, which is far away from the arcsine law (see, {\it e.g.} \cite{CrLu2} and the references therein). The construction of the WM-Fock space was the attempt to build a discrete model in which the position operators would have the arcsine distribution, and the result was partially satisfactory, since only some finite rank perturbations of the position operators were proved to have the arcsine distribution. In \cite{CGW} we showed that a single position operator on the WM-Fock space has the Wigner semicircle distribution, which is also away from the arcsine law, but still absolutely continuous. We also noticed that sums of such position operators have absolutely continuous distributions and, by monotone independence, the latter are indeed the monotone convolution powers of the Wigner semicircle law.

The problem of finding suitable noncommutative random variables whose vacuum distributions of their sums weakly converge, under appropriate scaling, to the analogue of Poisson measure in the classical case (law of small numbers), has been extensively studied in noncommutative probability. The operators involved are generally constructed on appropriate Fock spaces as the combination $A^{\dagger}+A+\lambda A^{0}$ of a position operator $A^{\dagger}+A$ and a scalar multiple $\lambda > 0$, called the \textit{intensity}, of the so-called preservation operator $A^0$. They are said nonsymmetric position operators, and usually one defines $A^0:=A^{\dagger}A$, the first example in the literature going back to \cite{AB}. This is the way we do it as well, following the free \cite{Sp} and the monotone \cite{Mur1999} probability cases. The intensity $\lambda$ introduces additional structure into the considerations, and is therefore worthwhile to study. This structure usually is related to the combinatorics of partitions combined with block orderings or labelings. Moreover, in the weakly monotone case, it significantly enlarges the class of partitions compared to the position operator case from \cite{CGW}, in which (for central limits) only noncrossing pair partitions appear. In free probability the analogue of the classical Poisson law is the Marchenko-Pastur distribution (see, {\it e.g.} \cite{Sp}), whose moments can be computed by taking all noncrossing partitions and putting the parameter (label) $\lambda$ on each block. A generalization of this result has been obtained in \cite{BLS} for conditionally free convolution.

The situation in monotone probability is not as clear. Muraki found only implicit form of the distribution for the Poisson type limit theorem for the monotone convolution, however its moments are related to all noncrossing partitions with monotone order on blocks (see \cite{Mur,HaS}).

In this paper we study the Poisson type limit for random variables $X_i(\l):=G_i+\l A_i^{0}$.
Here, $\l>0$, $G_i:=A_i+A_i^{\dag}$, and $A_i$, $A_i^{\dag}$, $A^{0}_i:=A^{\dag}_iA_i$ denote respectively the annihilation, creation and preservation operators, by orthonormal basis vectors $e_i\in \ch$ on the WM-Fock space $\gf_{WM}(\ch)$. In particular, after denoting
$$
S_m(\l):=\frac{G_1+\ldots+G_m}{\sqrt{m}}+ \l(A_1^{0}+\ldots+A_m^{0})\,, \quad m\in\bn\,,\,\, \l>0\,,
$$
we compute
$$
\lim_{m\to\infty}\langle S_m(\l)^n\Om,\Om\rangle
$$
for each $n\in\bn$, where $\Om$ is the so-called weakly monotone vacuum vector. For $n$ running in the positive integers, the limits above describe a sequence of moments of a distribution $\n_\l$. In the final part of these notes we compute $\n_\l$, and show that it belongs to the free Meixner class, and it is the same limit measure as the one for analogous sums of nonsymmetric monotone position operators, which first appeared in \cite{Mur1999}.

The paper is organized as follows. In Section \ref{prel} we present the WM-Fock space with creation, annihilation and preservation operators, and recall basic information about labeled noncrossing partitions, combinatorics of Motzkin and Riordan paths, basic properties of the Cauchy transform of a probability measure, and free Meixner laws as well. Based on Flajolet's results \cite{F1}, in Section \ref{moments} the vacuum law of any nonsymmetric position operator is seen to belong to the free Meixner class in Theorem \ref{measacRio}. More precisely, the law is absolutely continuous w.r.t. the Lebesgue measure when $0<\l\leq1$, and has both atomic and absolutely continuous parts if $\l>1$. In addition, we highlight some relations between moments and Motzkin and Riordan paths. As nonsymmetric position operators with intensity $\l$ are monotone independent random variables in the $*$-algebra generated by weakly monotone creation operators (see \cite{CGW}), one naturally tries to get some information on the vacuum law of their partial sums $\sum_{k=1}^m (G_k+\l A^0_k)$, which correspond to the $m$-fold monotone convolution of the free Meixner law aforementioned. To this aim is devoted Subsection \ref{sec3.1}, where in Theorem \ref{mommgeq2} we find a recursive formula for the vacuum moments of $T_m(\l_m):=\sqrt{m}S_m(\l)$ via \textit{noncrossing weakly monotone labeled partitions}, and moreover an explicit computation of the Cauchy transform of the measure in the case $m=2$.
In Section \ref{sec5} we obtain the limit distribution of $S_m(\l)$ in Theorem \ref{clt} as a free Meixner law containing atomic and absolutely continuous part. As one could expect it reduces to the standard arcsine law when $\l$ equals zero. Finally, in Section \ref{secfin}, we perform analogous considerations for the case of monotone Fock space. Although the vacuum law of a single nonsymmetric position operator here is a two-points discrete measure, the Poisson type limit is the same as in the weakly monotone case.

\section{preliminaries}
\label{prel}
In this section we give a miscellany of definitions, notations and some known results frequently used in the sequel.
\subsection{Weakly Monotone Fock space}
\label{weakmon}

Let $\ch$ be a separable Hilbert space with a fixed orthonormal basis $(e_i)_{i\geq 1}$. By $\gf(\ch)$ we denote the full Fock space on $\ch$, whose vacuum vector is $\Om=1\oplus0\oplus\cdots$. The \emph{weakly monotone Fock space}, in the sequel denoted by $\gf_{WM}(\ch)$, is the closed subspace of $\gf(\ch)$ spanned by $\Om$, $\ch$ and all the simple tensors of the form $e_{i_k}\otimes e_{i_{k-1}}\otimes \cdots\otimes e_{i_1}$, where $i_k\geq i_{k-1}\geq\cdots\geq i_1$
and $k\geq2$.

If the Hilbert space $\ch$ is finite dimensional with $n =\dim(\ch)\geq 2$, then the basis for $\gf_{WM}(\ch)$ consists of the vacuum and all the simple tensors
\begin{equation}
\label{basis}
e_n^{k_n}\otimes e_{n-1}^{k_{n-1}}\otimes\cdots\otimes e_1^{k_1}\,,
\end{equation}
where $k_n, k_{n-1},\ldots,k_1\geq 0$, $e_m^{k}:=\underbrace{e_m\otimes\cdots\otimes e_m}_k$ if $k\geq1$, and the convention that $e^{k_i}_{i}$ does not appear in \eqref{basis} if $k_i=0$.

The weakly monotone creation and annihilation operators with "test function" $e_i$, denoted by $A^\dag_i$ and $A_i$ respectively, are defined on the linear generators as follows. For any $i_k\geq i_{k-1}\geq \cdots \geq i_1$, $k\geq 2$, and $j\geq 1$
\begin{align*}
&A_i\Om=0\,, \quad A_ie_j=\delta_{i,j}\Om\,, \\
& A_i(e_{i_k}\otimes e_{i_{k-1}}\otimes \cdots\otimes e_{i_1})=\delta_{i,i_k}e_{i_{k-1}}\otimes \cdots\otimes e_{i_1}\,,
\end{align*}
where $\delta_{i,j}$ is the Kronecker symbol, and
\begin{align*}
&A^{\dag}_i \Om=e_i\,, \quad A^{\dag}_ie_j=\a_{i,j}e_i\otimes e_j\,, \\
&A^{\dag}_i(e_{i_k}\otimes e_{i_{k-1}}\otimes \cdots\otimes e_{i_1})=\a_{i,i_k} e_i\otimes e_{i_k}\otimes e_{i_{k-1}}\otimes \cdots\otimes e_{i_1}\,,
\end{align*}
where
\begin{equation*}
\a_{j,k}=\begin{cases}
1 & \text{if $j\geq k$,}\\
0 & \text{otherwise.}
\end{cases}
\end{equation*}
They can be extended by linearity and continuity to the whole $\gf_{WM}(\ch)$, are adjoint to each other, and of norm one. Furthermore, they satisfy the following relations
\begin{equation}
\label{cr}
\begin{split}
A^{\dag}_iA^{\dag}_j&=A_jA_i =0 \quad\text{if $i<j$,}\\
A_iA^{\dag}_j& =0 \,\,\,\,\,\,\,\,\,\,\,\,\,\,\,\,\,\,\,\,\,\,\,\,\,\,\, \text{if $i\neq j$.} \\
\end{split}
\end{equation}
As it will be useful in the sequel, we report here Lemma 2.1 from \cite{CGW} for the convenience of the reader.
\begin{lem}
\label{lem2.1WM}
For any $k,j\geq1$, one has
\begin{align}
\label{alpha}
A_kA_jA_j^{\dag}&=\a_{j,k}A_k  & A_jA_j^{\dag}A_k^{\dag}=\a_{j,k}A_k^{\dag}\,.
\end{align}
Moreover, for $j\geq k$
\begin{align}
\label{jgeqk}
A_jA^{\dag}_jA_k&=A_k  & A_k^{\dag}A_jA_j^{\dag}=A_k^{\dag}\,.
\end{align}
\end{lem}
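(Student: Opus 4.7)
The plan is to verify all four identities by direct computation on the linear generators of $\gf_{WM}(\ch)$ (the vacuum $\Om$ and the simple tensors of the form \eqref{basis}); by linearity and boundedness this is sufficient. Two basic observations drive the bookkeeping. First, if $v=e_{i_k}\otimes e_{i_{k-1}}\otimes\cdots\otimes e_{i_1}$ with $i_k\geq\cdots\geq i_1$, then $A_jA_j^{\dag}$ acts on $v$ as multiplication by $\a_{j,i_k}$: the creator $A_j^{\dag}$ prepends $\a_{j,i_k}e_j$ and the annihilator $A_j$ immediately removes it. Second, after $A_k$ has acted on $v$, the new top index is $i_{k-1}\leq k$ (or the vector is the vacuum), so any subsequent creator $A_j^{\dag}$ with $j\geq k$ is automatically admissible.

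For the first identity $A_kA_jA_j^{\dag}=\a_{j,k}A_k$, the observation above reduces $A_kA_jA_j^{\dag}v$ to $\a_{j,i_k}A_kv=\a_{j,i_k}\delta_{k,i_k}\,e_{i_{k-1}}\otimes\cdots\otimes e_{i_1}$, which equals $\a_{j,k}A_kv$ because the product $\delta_{k,i_k}\a_{j,i_k}$ forces $i_k=k$ whenever it is nonzero; on $\Om$ both sides vanish. The companion identity $A_jA_j^{\dag}A_k^{\dag}=\a_{j,k}A_k^{\dag}$ then follows immediately by taking adjoints of the first, since $A_jA_j^{\dag}$ is selfadjoint and $\a_{j,k}\in\br$.

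Assuming now $j\geq k$, for $A_jA_j^{\dag}A_k=A_k$ I would apply $A_k$ first: the output is supported on vectors whose top index is $\leq k\leq j$, so $A_j^{\dag}$ contributes a factor $1$ and $A_j$ undoes it, returning $A_kv$. The last identity $A_k^{\dag}A_jA_j^{\dag}=A_k^{\dag}$ follows again from the first observation: on $v$ the left-hand side equals $\a_{j,i_k}A_k^{\dag}v$, and this coincides with $A_k^{\dag}v$ because if $j\geq i_k$ the factor is $1$, whereas if $j<i_k$ then $k\leq j<i_k$ makes $\a_{k,i_k}=0$, so $A_k^{\dag}v=0$ as well. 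The only real obstacle is notational bookkeeping: maintaining the leftmost-is-largest convention and remembering that $\a_{j,k}$ is precisely the admissibility condition for placing a fresh $e_j$ on top of a tensor whose current top is $e_k$; beyond that the argument is purely mechanical.
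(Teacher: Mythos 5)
Your verification is correct: all four identities are checked soundly on the generating vectors, the key observation that $A_jA_j^{\dag}$ acts as multiplication by $\a_{j,i_k}$ (and as the identity on $\Om$) is right, and the adjoint argument for the second identity in \eqref{alpha} is legitimate since $A_j^*=A_j^{\dag}$. Note that this paper states the lemma without proof, importing it verbatim from \cite{CGW}, so there is no in-paper argument to compare against; your direct computation on basis tensors is the natural (and essentially only) route, and it is complete as written.
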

For each $i\in\bn$, we denote $A^0_i:= A^{\dag}_iA_i$ the orthogonal projection onto
$$
\gf_{WM}^{= i}(\ch):=\overline{\sp\{e_{k_m}\otimes \cdots \otimes e_{k_1}: i=k_m\geq \ldots \geq k_1, \ m\geq 1\}}\,.
$$
As usual (see, \emph{e.g.} \cite{AB}) we call it preservation (or conservation) operator.
Its relations with the other basic operators in $\gf_{WM}(\ch)$ are summarised below, and follow from \eqref{cr}
\begin{equation}
\label{crcons}
\begin{split}
A^0_iA^{\dag}_j&=A_jA^0_i =0 \quad \text{if $i\neq j$}\\
A^0_iA^0_j &=0 \,\,\,\,\,\,\,\,\,\,\,\,\,\,\,\,\,\,\,\,\,\,\,\,\,\,\,\text{if $i\neq j$}\\
A^0_iA_j &=A^{\dag}_jA^0_i =0 \quad\text{if $i> j$}.\\
\end{split}
\end{equation}

\subsection{Partitions of a finite set}
\label{partitions}

For $n,m\in\bn$ with $n<m$, let us denote $[n,m]:=\{n,n+1,\ldots, m\}$, and $[n]:=\{1,\ldots,n\}$. Let $S$ be a nonempty linearly ordered finite set. The collection $\pi=\{B_1,\ldots, B_p\}$ is a \emph{partition} of the set $S$ if the $B_i$ are disjoint nonempty subsets whose union is $S$. The $B_i$ are also called blocks of the partition $\pi$. Any $B_k\in\pi$ is called a \emph{singleton} if $|B_k|=1$, where $|\cdot|$ denotes the cardinality. The number of blocks of $\pi$ is denoted by $|\pi|$, and the set of all the partitions of $S$ is $P(S)$. In the special case $S=[n]$ we denote this set by $P(n)$. A similar notation for subsets of $P(n)$ will be introduced in the next paragraph.

A block $B$ of $\pi\in P(n)$ is called an \emph{interval block} if $B=[l,m]$ for some $1\leq l\leq m\leq n$. An \emph{interval partition} of $[n]$ is an element of $P(n)$ for which every block is an interval. In addition, the set $V(n)$ of interval partitions on $[n]$ is in bijection to the \emph{compositions} of $n$, \emph{i.e} the sequences of integers $(q_1,\ldots,q_j)$ such that $q_h>0$, and $q_1+\cdots + q_j=n$.

A partition $\pi$ has a \emph{crossing} if it contains at least two distinct blocks $B_i$ and $B_j$, and elements $v_1,v_2\in B_i$, $w_1,w_2\in B_j$ s.t. $v_1<w_1<v_2<w_2$. Otherwise, it is called \emph{noncrossing}. We denote by $NC_{2,1}(S)$ the set of noncrossing partitions such that each block contains at most two elements. If $|S|$ is even, $NC_2(S)$ denotes the set of noncrossing partitions with all blocks of cardinality $2$.

A noncrossing partition of $[n]$ is called \emph{irreducible} if there is a block containing both $1$ and $n$. Note that every partition is uniquely decomposed into irreducible (sub)partitions, generally called factors \cite{AHLV}.

A block $B_k\in\pi$ is called \emph{inner} if there exist $B_h\in\pi$ and $v_1,v_2\in B_h$ such that $v_1<w<v_2$ for all $w\in B_k$. A block is called \emph{outer} if it is not inner, and $\pi$ is noncrossing. In this case, the subset collecting the partitions without outer singletons is denoted by $PI(S)$. From now on, we put
$$
NCI_{2,1}(n):=NC_{2,1}(n)\cap PI(n).
$$

For $\pi$ a noncrossing partition of $[n]$ with blocks $\pi=\{B_1, \ldots , B_k\}$, one has a natural partial  order $\preceq_{\pi}$ on the blocks given by $B_i\preceq_{\pi}B_j$ if $B_j$ is nested inside $B_i$, i.e. $\min B_i\leq \min B_j \leq \max B_j \leq \max B_i$, where $\min B$ (resp. $\max B$) denotes the minimal (resp. maximal) element of the block $B$. When the extremal inequalities above are strong, we write $B_i\prec_{\pi}B_j$.

Let $\pi=\{B_1, \ldots , B_k\}$ be a partition of $[n]$ and $J$ be a set. A \emph{labeling} with values in $J$ is a function $L: \pi \rightarrow J$, and the pair $(\pi,L)$ is called a \emph{labelled partition}. We denote $LP(J,n)$ the set of labeled partitions with labels in $J$.

In what follows the aforementioned partition $\pi$ has to be taken noncrossing. Recall that for $x,y$ arbitrary elements of a poset $(S,\leq)$, one says that $y$ \emph{covers} $x$ if $x<y$ and there is no $z$ such that $x<z<y$. If $\pi=\{B_1,\ldots, B_k\}\in LP(J,n)$ with label function $L$ is such that $L(B_j)=L(B_i)$ for every singleton $B_j$ which covers a block $B_i$ w.r.t. $\preceq_{\pi}$, we say that $\pi$ belongs to $NCLP_s(J,n)$.

The label function $L$ is called \emph{weakly monotone} if it preserves the ordering of the blocks, i.e. $L(B_i)\leq L(B_j)$ when $B_i\preceq_{\pi} B_j$. A weakly monotone label function is called \emph{monotone} if $L(B_i)< L(B_j)$ when $B_i$ and $B_j$ both contain at least two elements, and $\min B_i< \min B_j < \max B_j < \max B_i$. The labelled partitions belonging to $NCLP_s(J,n)$ whose label function is weakly monotone (monotone) are denoted by $NCLP_s^{wm}(J,n)$ ($NCLP_s^{m}(J,n)$).
Finally, by $ANC^{wm}(J,n)$ we denote the noncrossing weakly monotone partitions on $[n]$ without outer singletons, with labels in $J$ and blocks of cardinality at most 2, and finally belonging to $NCLP_s(J,n)$. In other words
$$
ANC^{wm}(J,n):=NCI_{2,1}(n)\cap NCLP_s^{wm}(J,n)\,.
$$
In the case of monotone labelled partitions, one takes
$$
ANC^{m}(J,n):=NCI_{2,1}(n)\cap NCLP_s^{m }(J,n)\,.
$$
The so-called \emph{nesting forest} representation for noncrossing partitions (see, {\it e.g.} \cite{AHLV}, Definition 3.1) appears useful to describe the latter classes (see Figure \ref{fignesting}). Each irreducible factor is seen as a Hasse diagram, where the labeling is not decreasing moving downward, and the same colour of vertices necessarily corresponds to the same labeling.

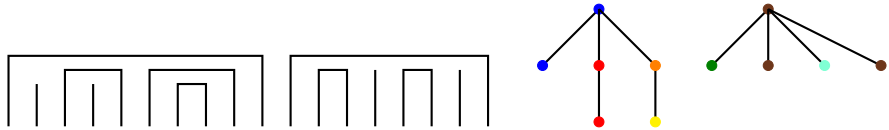
\begin{figure}[h]
\centering
\begin{tikzpicture}[scale=0.75]
     \draw[thick] (0,0) -- +(0,1.25) -| (4.5,0);
     \draw[thick] (0.5,0) -- +(0,0.75);
     \draw[thick] (1,0) -- +(0,1) -| (2,0);
     \draw[thick] (1.5,0) -- +(0,0.75);
     \draw[thick] (2.5,0) -- +(0,1) -| (4,0);
     \draw[thick] (3,0) -- +(0,0.75) -| (3.5,0);
     \draw[thick] (5,0) -- +(0,1.25) -| (8.5,0);
     \draw[thick] (5.5,0) -- +(0,1) -| (6,0);
     \draw[thick] (6.5,0) -- +(0,1);
     \draw[thick] (7,0) -- +(0,1) -| (7.5,0);
     \draw[thick] (8,0) -- +(0,1);
   \end{tikzpicture}
   \hspace{0.5cm}%
   \begin{tikzpicture}[scale=0.75]
   \draw[thick]
(0,0) -- (-1,-1);
\filldraw[blue] (0,0) circle (2.5pt);
\filldraw[blue] (-1,-1) circle (2.5pt);
\draw[thick]
(0,0) -- (0,-1) -- (0,-2);
\filldraw[red] (0,-1) circle (2.5pt);
\filldraw[red] (0,-2) circle (2.5pt);
\draw[thick]
(0,0) -- (1,-1) -- (1,-2);
\filldraw[orange] (1,-1) circle (2.5pt);
\filldraw[yellow] (1,-2) circle (2.5pt);
\draw[thick]
(3,0) -- (2,-1);
\filldraw[auburn] (3,0) circle (2.5pt);
\filldraw[ao(english)] (2,-1) circle (2.5pt);
\draw[thick]
(3,0) -- (3,-1);
\filldraw[auburn] (3,-1) circle (2.5pt);
\draw[thick]
(3,0) -- (4,-1);
\filldraw[aquamarine] (4,-1) circle (2.5pt);
\draw[thick]
(3,0) -- (5,-1);
\filldraw[auburn] (5,-1) circle (2.5pt);
\end{tikzpicture}
\caption{A partition in $ANC^{wm}([7],18)$ and its nesting forest.}
\label{fignesting}
\end{figure}

\subsection{Motzkin and Riordan paths}
\label{moz}
Recall that a \emph{Motzkin path of size $n$} is a lattice path in the integer plane $\bz\times\bz$ consisting of up-steps $a:=(1, 1)$, down-steps $b:=(1,-1)$ and level-steps $c:=(1,0)$, beginning in $(0,0)$ and ending in $(n,0)$, which never passes below the $x$-axis. We denote by $\cam(n)$ the set of all Motzkin paths of size $n$, and $M_n:=|\cam(n)|$ is called $n$th \emph{Motzkin number}.

Following \cite{F1}, $\cam(n)$ reduces to the collection of words $u_1u_2\cdots u_n$ on the alphabet $\{a,b,c\}$ satisfying
\begin{align}
\begin{split}
\label{fj1}
&|u_1u_2\cdots u_j|_a\geq |u_1u_2\cdots u_j|_b\,, \quad 1\leq j\leq n-1\,, \\
&|u_1u_2\cdots u_n|_a = |u_1u_2\cdots u_n|_b\,,
\end{split}
\end{align}
where $|x|_i$ denotes the number of occurrences of $i\in\{a,b,c\}$ in $x$.
\begin{rem}
\label{MD}
Removing the horizontal steps a Motzkin path results in a Dyck path, and immediately leads to the following relation \cite{Ber}
$$
M_n=\sum_{k=0}^{[\frac{n}{2}]}\binom{n}{2k}C_k,
$$
where for any $x\in\br$, $[x]$ is the unique integer $m$ such that $m\leq x< m+1$, and $(C_k)$ is the Catalan sequence.
\end{rem}
\emph{Riordan paths} are Motzkin paths without horizontal steps on the $x$-axis. The set of Riordan paths from $(0,0)$ to $(n,0)$ is denoted by $\car(n)$, and $R_n$ is their number.

Motzkin and Riordan numbers are connected by the following formula \cite{Ber}
\begin{equation}
\label{MnRn}
M_n=R_n+R_{n+1}.
\end{equation}
Any of the aforementioned lattice paths is said \emph{irreducible} if it does not touch the $x$-axis except at the beginning and at the end.

In the sequel it will be useful to replace the alphabet $a,b,c$ with $\dag,1,0$, respectively. In this case, conditions \eqref{fj1} then realize any Motzkin path of length $n$ by means of a sequence $\eps=(\eps(1),\ldots, \eps(n))\in\{1,0,\dag\}^n$ such that

\medskip

(1) $|\eps(1),\ldots, \eps(n)|_1=|\eps(1),\ldots, \eps(n)|_\dag$

\medskip

(2) $|\eps(1),\ldots, \eps(j)|_1 \leq |\eps(1),\ldots, \eps(j)|_\dag$\,,\,\,\, for $j=1,\ldots, n-1$\,.

\medskip

In this picture, $\cam(n)$ is denoted by $\{1,0,\dag\}^n_{M}$. When in particular $\eps(j)\in\{1,\dag\}$ for any $j$, by Remark \ref{MD} $\eps$ uniquely corresponds to a Dyck path of length the necessarily even $n$, or equivalently to a unique partition in $NC_2(n)$. If in addition,

\medskip

(3) $\eps(1)=\dag$

\medskip

(4) $|\eps(1),\ldots, \eps(j)|_1=|\eps(1),\ldots, \eps(j)|_\dag\Longrightarrow\eps(j+1)=\dag$\,,\,\,\, $j\geq 2$

\medskip

there are no horizontal steps in the Motzkin path on the $x$-axis. If $\{1,0,\dag\}^n_{+}$ denotes the sequences $\eps\in\{1,0,\dag\}^n$ satisfying (1) - (4), one finds that $\{1,0,\dag\}^n_{+}\equiv\car(n)$.
This gives sense to the notation $\eps\in\car(n)$ often used in the sequel, and therefore $R_n=|\{1,0,\dag\}^n_{+}|$.

Finally, any $\eps\in\{1,0,\dag\}^n_{+}$ is decomposed as $\eps=\eps'\sqcup \eps''$, where $\eps''(j)\in\{1,\dag\}$ and $\eps'(j)=0$ for any $j$.  Thus $\eps''$ is identified to a Dyck path or, equivalently, $\eps''\in NC_2(2r)$ for some $r\leq\frac{n}{2}$. Here, each block reduces to a pair $(i,j)$ such that $\eps(i)=1$, and $\eps(j)=\dag$. The reader is referred to \cite{CrLu} for similar arguments in the setting of interacting Fock spaces.

\subsection{Cauchy transform of a measure}
\label{sec2c}

Let $\m$ be a probability measure defined on the Borel $\s$-field over $\br$ with finite moments. The moment sequence associated with $\mu$ is denoted by $(m_n(\mu))$. For each $z\in\bc$
$$
\cam_\m(z):=\sum_{n=0}^{\infty}z^nm_n(\mu)
$$
is called moment generating function, which is considered as a formal power series if the series is not absolutely convergent.

From now on $\mathbb{C}^+$ and $\mathbb{C}^-$ will be the upper and lower complex half-planes, respectively.  The Cauchy transform of $\m$ is defined as
\begin{equation*}
\cg_{\mu}(z):=\int_{-\infty}^{+\infty}\frac{\mu(dx)}{z-x}.
\end{equation*}

$\cg_{\mu}(z)$ is analytic in $\bc\setminus \supp({\mu})$, $\cg_{\mu}(\overline{z})=\overline{\cg_{\mu}(z)}$ and $\cg_{\mu}$ maps $\bc^+$ into $\bc^-$. As a consequence, it suffices to consider it on the domain on $\mathbb{C}^+\cup  \mathbb{R}\setminus \supp({\mu})$. In this region it uniquely determines $\m$ as summarized below (see {\it e.g.} \cite{horaob}):

\noindent (1) The limit
\begin{equation}
\label{stiedef}
\gpg(x):=-\frac{1}{\pi}\lim_{y\to 0^{+}}\Im\cg_{\mu}(x+iy)
\end{equation}
exists for a.e. $x\in\br$, and $\gpg(x)\,dx$ is the absolutely continuous part of $\mu$. Formula \eqref{stiedef} is called Stieltjes inversion formula.

\noindent (2) $\cg_{\mu}(z)$ has a simple pole in $z=a\in\mathbb{R}$ if and only if $a$ is an isolated point of $\supp({\mu})$. In this case
$$
\m=c\d_a+(1-c)\nu,\,\,\,\,\, 0\leq c \leq 1
$$
and $\nu$ is a probability measure for which $\supp({\nu})\cap\{a\}=\emptyset$. Furthermore, $c=\text{Res}_{z=a}\cg_{\mu}(z)$.

We end the subsection by noticing that
\begin{equation}
\label{AA}
\cg_\m(z)=\frac{1}{z}\cam_\m\bigg(\frac{1}{z}\bigg)\,, \quad z\in\bc^+\,.
\end{equation}

\subsection{Free Meixner laws}
\label{sec2d}
We end the section briefly presenting free Meixner laws. The interested reader is referred to \cite{KWW,SaYo} for further details.
The family $\mu_{\bfu}$ on $\mathbb{R}$ of free Meixner laws is parametrized by $\bfu:=(a, b, c,\a)\in \mathbb{R}^4$ with $b, c\geq0$, which orthogonalize the family of polynomials $P_n(x):=P^{\bfu}_n(x)$ given by the following recurrence
\begin{align*}
\begin{split}
&P_0(x) := c,\quad P_1(x):=x-\a, \\
&P_{n+1}(x) = (x- a)P_n(x) -bP_{n-1}(x),\quad\text{for $n\geq 1$}\,.
\end{split}
\end{align*}
Moreover, the measure $\mu_{\bfu}$ has absolutely continuous part $\mu_C$ with density
\begin{equation}
\label{ac}
\mu_C(dx)= \frac{c\sqrt{4b-(x-a)^2}}{2\pi f(x)}\chi_{[a-2\sqrt{b}, a +2\sqrt{b}]}(x)\,,
\end{equation}
where
$$
f(x)=(1-c)(x-a)^2+(c-2)(\a-a)(x-a)+(\a-a)^2+bc^2\,.
$$
Its discrete part $\mu_D$ is $0$ except possibly in the following cases:

\bigskip

(1) $f(x)$ has two real roots $x_1\neq x_2$. Then
  \begin{equation}
  \label{muD}
  \mu_D=\g_1\d_{x_1}+\g_2\d_{x_2}\,,
  \end{equation}
  where
  \begin{equation}
  \label{gammai}
  \g_i=\frac{1}{\sqrt{(\a-a)^2-4b(1-c)}}\bigg(\frac{bc}{|x_i-\a|}-\frac{|x_i-\a|}{c}\bigg)_{+}, \quad i=1,2\,.
  \end{equation}

\bigskip

(2) $c=1$ and $\a\neq a$, so that $f(x)$ has one root $x_1=\a+b/(\a-a)$. Then
\begin{equation}
\label{dis}
\mu_D=\bigg(1-\frac{b}{(\a-a)^2}\bigg)_{+}\d_{x_1}\,,
\end{equation}
where $(r)_{+}:=(r+|r|)/2$.

\bigskip

Finally, we recall that the Cauchy transform of $\mu_{\bfu}$  is given by
\begin{equation}
\label{cauchysayo}
\cg_{\mu_{\bfu}}(z)=\frac{\{(c-2)z+(2\a-ac)\}\pm c\sqrt{(z-a)^2-4b}}{-2f(z)}\,,
\end{equation}
where the branch of the analytic square root is determined by the condition $\Im \cg_{\mu_{\bfu}}(z)< 0$ when $\Im z>0$.

%%%%%%%%%%%%%%%%%%%%%%%%%%%%%%%%%%%%%%%%%%%%%%%%%%%%%%%%%%%%%%%%%%%%%%%%%%
%%%%%%%%%%%%%%%%%%%%%%%%%%%%%%%%%%%%%%%%%%%%%%%%%%%%%%%%%%%%%%%%%%%%%%%%%%
%%%%%%%%%%%%%%%%%%%%%%%%%%%%%%%%%%%%%%%%%%%%%%%%%%%%%%%%%%%%%%%%%%%%%%%%%%
\section{the vacuum law of nonsymmetric position operators}
\label{moments}
%%%%%%%%%%%%%%%%%%%%%%%%%%%%%%%%%%%%%%%%%%%%%%%%%%%%%%%%%%%%%%%%%%%%%%%%%%
%%%%%%%%%%%%%%%%%%%%%%%%%%%%%%%%%%%%%%%%%%%%%%%%%%%%%%%%%%%%%%%%%%%%%%%%%%

Motivated by the symmetric central limit theorem for weakly monotone selfadjoint operators \cite{CGW}, we first study the vacuum distribution, \emph{i.e.} the law in the vacuum state $\om_\Om:=\langle\cdot\Om,\Om\rangle$, for sums of nonsymmetric position operators
$$
S_m(\l):=\sum_{k=1}^m \bigg(\frac{G_k}{\sqrt{m}}+\l A^0_k\bigg)\,,
$$
where $\l>0$, $m\geq1$, and $G_k:=A_k+A^{\dag}_k$. The symmetric part, given by the sum of the so-called gaussian operators $G_k$, and obtained by taking $\l=0$, has been already treated in \cite{CGW}. As $S_m(\l)$ is bounded and selfadjoint for each $m$, the moment problem of its vacuum distribution has a unique solution.

For $m\geq 1$, it seems convenient to introduce the operator
$$
T_m(\l_m):=\sqrt{m}S_m(\l)=\sum_{k=1}^m \bigg(G_k+\l_m A^0_k\bigg)\,,
$$
where $\l_m:=\l\sqrt{m}$. Thus, we have to handle the $n$th moments for such sums, namely
$$
b_{m,n}(\l_m):=\om_{\Om}\big(\big(T_m(\l_m)\big)^{n}\big), \quad \l_m>0.
$$
We start with the case $m=1$, where we simply denote the aforementioned objects by $T_1(\l)$ and $b_{1,n}(\l)$.

The law of the nonsymmetric position operator belongs to the free Meixner class, as shown by the following
\begin{thm}
\label{measacRio}
The distribution $\m_{1,\l}$ of $T_1(\l)$ in the vacuum state is the free Meixner law with parameters $(\l,1,1,0)$. Namely,
\begin{equation*}
\m_{1,\l}=
\begin{cases}
\big(1-\frac{1}{\l^2}\big)\delta_{-\frac{1}{\l}}+{\m_{1,\l}}^{a.c.} & \text{if $\l >1$}\\
{\m_{1,\l}}^{a.c.}  & \text{if $0<\l\leq 1$}\,,\\
\end{cases}
\end{equation*}
where ${\m_{1,\l}}^{a.c.}$ is the absolutely continuous part, with the density function
\begin{equation}
\label{de1}
\m^{a.c.}_{1,\l}(dx)=\frac{\sqrt{4-(x-\l)^2}}{2\pi(\l x+1)}\chi_{[\l-2,\l+2]}(x)\,,
\end{equation}
and the above support reduces to $(-1,3]$ when $\l=1$. In addition, for any $n$
\begin{equation}
\label{MOM1}
b_{1,n}(\l)=\sum_{\eps\in\car(n)}\l^{|\eps|_0}\,,
\end{equation}
where $|\eps|_0$ denotes the number of level-steps in $\eps$.
\end{thm}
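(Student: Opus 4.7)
The strategy is to first establish the moment formula \eqref{MOM1} by expanding $(T_1(\l))^n$ over operator words, then feed the resulting weights into Flajolet's continued fraction machinery \cite{F1} to obtain the Cauchy transform in closed form, and finally identify the outcome with the free Meixner law of parameters $\bfu=(\l,1,1,0)$ from Subsection \ref{sec2d}.

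For the combinatorial step, observe that $A_1,A_1^{\dag},A_1^0$ all stabilize the cyclic subspace $\overline{\sp\{e_1^{\otimes k}:k\geq 0\}}$, and on its orthonormal basis $T_1(\l)$ acts as a tridiagonal Jacobi matrix with unit off-diagonals and diagonal $(0,\l,\l,\l,\dots)$, the leading $0$ reflecting $A_1^0\Om=0$. Expanding $(T_1(\l))^n$ and applying to $\Om$ turns each word $\eps\in\{1,0,\dag\}^n$ into a lattice walk on $\bz_{\geq 0}$ with steps $+1,-1,0$ according as $\eps(j)\in\{\dag,1,0\}$, with the horizontal step $0$ forbidden at height zero. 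For the matrix element $\om_\Om(\cdot)$ to survive, the walk must begin and end at height $0$: these constraints recover exactly conditions (1)--(4) defining $\car(n)$. Each admissible word contributes its multiplicative weight $\l^{|\eps|_0}$, proving \eqref{MOM1}.

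By Flajolet's continued fraction theorem applied to the above Jacobi parameters, or equivalently by the self-similarity of the tail starting from height $1$, one has $\cg_{\m_{1,\l}}(z)=1/(z-H(z))$ where $H(z)=1/(z-\l-H(z))$. Solving this quadratic and substituting back yields
$$
\cg_{\m_{1,\l}}(z)=\frac{z+\l-\sqrt{(z-\l)^2-4}}{2(\l z+1)}\,,
$$
with the branch of the square root fixed by $\Im \cg_{\m_{1,\l}}(z)<0$ for $\Im z>0$. Matching this against formula \eqref{cauchysayo} under $\bfu=(\l,1,1,0)$, for which a direct computation gives $f(z)=\l z+1$, identifies $\m_{1,\l}$ as the asserted free Meixner law.

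Finally, plugging $\bfu=(\l,1,1,0)$ into the density formula \eqref{ac} gives directly \eqref{de1}, supported on $[\l-2,\l+2]$; while case (2) of Subsection \ref{sec2d} ($c=1$, $\a=0\ne \l=a$) identifies the atomic piece: the unique root of $f$ is $x_1=\a+b/(\a-a)=-1/\l$, strictly below the support whenever $\l\neq 1$, and carrying mass $(1-1/\l^2)_+$ by \eqref{dis}; this is positive only for $\l>1$. At the critical value $\l=1$ the root $-1/\l=-1$ coalesces with the left endpoint $\l-2=-1$ of the support, killing the atom and leaving the density on the half-open interval $(-1,3]$. The main obstacle is verifying that the relation $A_1^0\Om=0$ translates cleanly into condition (4) of the Riordan definition rather than producing a general Motzkin sum; once this identification is secured, the remaining steps are routine continued-fraction algebra and substitution into the formulas of Subsection \ref{sec2d}.
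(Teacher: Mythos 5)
Your proposal is correct and follows essentially the same route as the paper: the Jacobi matrix representation of $T_1(\l)$ on the cyclic subspace, the periodic continued fraction for the Cauchy transform, identification with \eqref{cauchysayo} for $\bfu=(\l,1,1,0)$ with $f(z)=\l z+1$, and the density/atom read off from \eqref{ac} and \eqref{dis}. The only (harmless) difference is that you derive \eqref{MOM1} directly from the operator-word expansion before computing the transform, whereas the paper obtains the Cauchy transform first and then invokes Flajolet's Theorem~1 on the continued fraction \eqref{GENF} to read off the weighted Riordan-path interpretation.
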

In Figure \ref{fig:dens_m1} we report the plots of $\m_{1,\l}$ in the cases $\l=0.5$, $\l=1$ and $\l=2$.
\begin{figure}[b]
\centering
\subfloat[][\emph{Distribution for $m=1$ and $\l=0.5$}.]
{\includegraphics[width=.45\textwidth]{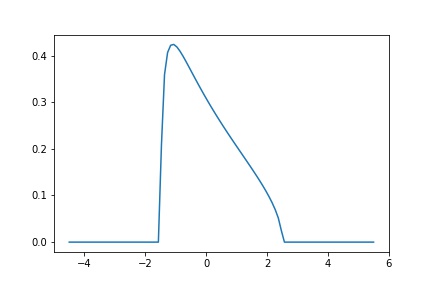}} \quad
\subfloat[][\emph{Distribution for $m=1$ and $\l=1$}.]
{\includegraphics[width=.45\textwidth]{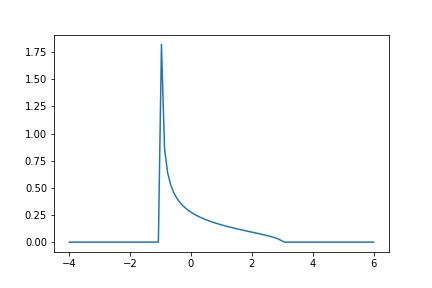}} \\
\subfloat[][\emph{Distribution for for $m=1$ and $\l=2$}.]
{\includegraphics[width=.45\textwidth]{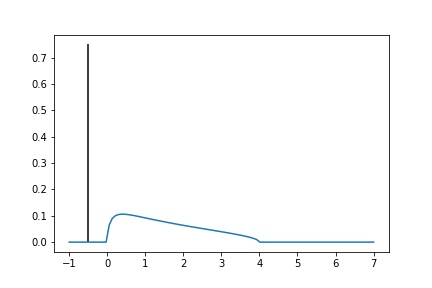}} \quad
\caption{Vacuum distribution of $T_1(\l)$ for $\l=0.5$, $\l=1$ and $\l=2$.}
\label{fig:dens_m1}
\end{figure}
\begin{proof}
We first compute the moment generating function and the Cauchy transform for the sequence $(b_{1,n}(\l))$ which indicate the relation of the latter with Riordan paths. Fix $\l>0$. If for any $i$
$$
\,\,\,\,\,\,\,\,\,\,\,\,\,\,\,\,\,\,\,\,\,\,\,\,\,\,\,\,\,\,\,\,\,\,A_ie_i^{\otimes n}=\left\{
                     \begin{array}{cc}
                       0 & \text{if $n=0$\,,} \\
                       e_i^{\otimes(n-1)} & \text{if $n>0$\,,} \\
                     \end{array}
                   \right.
$$
$$
A^\dag_ie_i^{\otimes n}=e_i^{\otimes(n+1)}\,,
$$
where $n=0$ corresponds to $\Omega$,
$T_1(\l)$ has the following matrix representation
$$
\left(
  \begin{array}{ccccccc}
    0 & 1 & 0 & 0 & 0 & 0 & \cdots  \\
    1 & \l & 1 & 0 & 0 & 0 &  \cdots  \\
    0 & 1 & \l & 1 & 0 & 0 & \cdots  \\
    0 & 0 & 1 & \l & 1 & 0 & \cdots   \\
     &  &  & \ddots & \ddots & \ddots    \\
  \end{array}
\right)
$$
w.r.t. the canonical basis $(e_i^{\otimes n})_{n\geq 0}$. This is a Jacobi matrix, and its $J$-transform is the continued fraction of the Cauchy transform for $(b_{1,n}(\l))$ \cite{Ak}. It is given by
\begin{equation*}
\cg_{1,\l}(z)=\frac{1}{z-\frac{1}{z-\l-\frac{1}{z-\l-\frac{1}{\ddots}}}}\,,
\end{equation*}
and then
\begin{equation}
\label{cauchyR}
\cg_{1,\l}(z)=\frac{z+\l \mp \sqrt{(z-\l)^2-4}}{2(\l z+1)}\,.
\end{equation}
By \eqref{AA} one gets the moment generating function
\begin{equation*}
\cam_{1,\l}(z)=\frac{(1+\l z)-\sqrt{(1-\l z)^2-4z^2}}{2(\l z+z^2)}\,.
\end{equation*}
The continued fraction version assumes the form
\begin{equation}
\label{GENF}
\cam_{1,\l}(z)=\frac{1}{1-\l z-\frac{z^2}{1-\l z-\frac{z^2}{\ddots}}}\,.
\end{equation}
Arguing as in \cite{F1}, Theorem 1, one finds that \eqref{GENF} is indeed the moment generating function of positive lattice paths with weight 1 on up and down-steps, and weight $\l$ on horizontal steps except 0 on the $x$-axis. Thus \eqref{MOM1} is achieved.\\
We note that the Cauchy transform in \eqref{cauchyR} is exactly the one given in \eqref{cauchysayo} for $(a,b,c,\a)=(\l,1,1,0)$, and $f(z)=\l z+1$.
Since
$$
\lim_{x\rightarrow \l+2^-}\frac{\sqrt{4-(x-\l)^2}}{2\pi(\l x+1)}\chi_{[\l-2,\l+2]}(x)=0
$$
and
$$
\lim_{x\rightarrow \l-2^+}\frac{\sqrt{4-(x-\l)^2}}{2\pi(\l x+1)}\chi_{[\l-2,\l+2]}(x)=
\begin{cases}
0 & \text{if $\l\neq 1$}  \\
+\infty & \text{if $\l=1$}\,,
\end{cases}
$$
from \eqref{ac} it follows that the probability measure $\m_{1,\l}$ has the absolutely continuous part given in \eqref{de1}.
Finally, $x_0=-\frac{1}{\l}$ is a root of $f(x)$, and \eqref{dis} entails $x_0$ is an atom with mass
\begin{equation*}
\begin{split}
\bigg(1-\frac{1}{\l^2}\bigg)_{+} = \begin{cases}
0 & \text{if $0<\l\leq 1$}\\
1-\frac{1}{\l^2} & \text{if $\l>1$}.
\end{cases}
\end{split}
\end{equation*}
\end{proof}
One notices that for any $n\geq2$ and $\l>0$, $b_{1,n}(\l)$ is a monic polynomial of degree $n-2$. Indeed, the degree is obtained
by taking the unique path starting with an up-set, followed by $n-2$ level-steps and ending with a down-step. As an example, a direct
computation gives for $n=0,\ldots,5$ the following values for the moment sequence
$$
b_{1,n}(\l)=\{1,0,1,\l,\l^2+2,\l^3+5\l\},\quad \l>0\,.
$$
A recursive formula for these moments is provided by the following
\begin{prop}
\label{recbmlam}
For each $n\geq2$ and $\l>0$, one has
\begin{equation}
\label{bmlMotz}
b_{1,n}(\l)=\sum_{k=2}^{n}b_{1,n-k}(\l) M_{k-2}(\l),
\end{equation}
where
\begin{equation}
\label{Motzlam}
M_m(\l):=\sum_{\eps\in\cam(m)}\l^{|\eps|_0}, \quad M_0(\l)=1.
\end{equation}
\end{prop}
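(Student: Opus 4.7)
The plan is to prove the recursion combinatorially, exploiting \eqref{MOM1} together with a first-return-to-the-$x$-axis decomposition of Riordan paths. Since $b_{1,n}(\l) = \sum_{\eps \in \car(n)} \l^{|\eps|_0}$, and $b_{1,0}(\l) = 1$ by the empty-path convention, it suffices to exhibit a weight-preserving bijection
\begin{equation*}
\car(n) \;\longleftrightarrow\; \bigsqcup_{k=2}^{n} \cam(k-2) \times \car(n-k)
\end{equation*}
in which the statistic $|\cdot|_0$ is additive across the two factors, and then to collect weights using \eqref{Motzlam}.

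Fix $\eps \in \car(n)$ with $n \geq 2$. Condition (3) in Subsection \ref{moz} forces $\eps(1) = \dag$, while condition (1) forces the path to end at level zero, so the first-return index $k = k(\eps) \in \{2,\ldots,n\}$, defined as the smallest $j \geq 2$ with $|\eps(1)\cdots\eps(j)|_\dag = |\eps(1)\cdots\eps(j)|_1$, is well defined and satisfies $\eps(k) = 1$. Setting $\eta := \eps(2)\cdots\eps(k-1)$ and $\rho := \eps(k+1)\cdots\eps(n)$ (empty when $k=2$ or $k=n$, respectively), the middle word $\eta$ lies strictly above the $x$-axis inside $\eps$, so after shifting heights down by one it becomes a Motzkin path of length $k-2$ in which horizontal steps are unrestricted because in $\eps$ they occurred at heights $\geq 1$. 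The tail $\rho$ starts immediately after a return to level zero: condition (4) of $\eps$ applied at $j=k$ forces $\rho$ to begin with $\dag$ when nonempty, and all four Riordan conditions are then inherited, whence $\rho \in \car(n-k)$. The inverse assignment $(k,\eta,\rho) \mapsto \dag\,\eta\,1\,\rho$ is immediate, and since the two inserted symbols at positions $1$ and $k$ contribute no level-steps, $|\eps|_0 = |\eta|_0 + |\rho|_0$. Summing the weights therefore yields
\begin{equation*}
b_{1,n}(\l) \;=\; \sum_{k=2}^{n} \Bigl(\sum_{\eta\in\cam(k-2)} \l^{|\eta|_0}\Bigr)\Bigl(\sum_{\rho\in\car(n-k)} \l^{|\rho|_0}\Bigr) \;=\; \sum_{k=2}^{n} M_{k-2}(\l)\,b_{1,n-k}(\l),
\end{equation*}
which is \eqref{bmlMotz}.

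The only step that demands genuine care is the verification that condition (4) on $\rho$ is implied by condition (4) on $\eps$: every return of $\rho$ to level zero corresponds to a return of $\eps$ at some position $j > k$, and the next step of $\eps$ (equivalently of $\rho$) is then forced to be $\dag$ by condition (4) applied to $\eps$; everything else is routine bookkeeping. An alternative route would derive the equivalent generating-function identity $R(z) = 1 + z^2 M(z) R(z)$ directly from the continued-fraction expression \eqref{GENF}, but the bijective argument above stays closer to the combinatorial framework already set up in Subsection \ref{moz}.
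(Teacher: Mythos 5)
Your proof is correct and follows essentially the same route as the paper: the unique decomposition of a Riordan path at its first return to the $x$-axis into an irreducible factor of length $k$ (which, after deleting the initial up-step and final down-step, is an arbitrary Motzkin path of length $k-2$) and a residual Riordan path of length $n-k$, with the level-step statistic additive across the factors. The only cosmetic difference is that you phrase this as a direct weight-preserving bijection and check the inheritance of conditions (3)--(4) on the tail explicitly, whereas the paper wraps the same decomposition in an (inessential) induction; the substance is identical.
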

\begin{proof}
Indeed, as $b_{1,0}(\l)=1$, the basic step $n=2$ in \eqref{bmlMotz} is trivially satisfied.

Now we suppose that \eqref{bmlMotz} holds for any $s<n$.
The definition of $\{1,0,\dag\}^n_+\equiv \car(n)$ gives that any Riordan path of length $n$ is uniquely decomposed into an irreducible Riordan path of length $k$, and a Riordan path of length $n-k$, for $k=2,\ldots,n$. If $\overline{\car}(k)$ denotes the set of irreducible Riordan paths of length $k$, from \eqref{MOM1} it follows
\begin{align*}
b_{1,n}(\l)&=\sum_{k=2}^n\sum_{\eps'\in \overline{\car}(k)}\l^{|\eps'|_0} \sum_{\eps''\in \car(n-k)}\l^{|\eps''|_0} \\
&=\sum_{k=2}^n b_{1,n-k}(\l)\sum_{\eps'\in \overline{\car}(k)}\l^{|\eps'|_0}\,,
\end{align*}
the last equality coming from the induction assumption. For any fixed length $k\geq2$, irreducible Riordan paths and irreducible Motzkin paths coincide. In addition, after removing the first and the last step, the latter are in one-to-one correspondence with the  Motzkin paths of length $k-2$ (see Figure \ref{figb1n}). This ends the proof.
\end{proof}

\begin{figure}[h]
\centering
\begin{tikzpicture}[scale=0.50]
\filldraw[thick]
(0,0) circle (2pt) -- (1,1) circle (2pt);
\filldraw[dotted]
(1,1) circle (2pt) -- node[align=center,above] {Irreducible \\ $R_k$} (4,1) circle (2pt);
\filldraw[thick]
(4,1) -- (5,0) circle (2pt);
\filldraw[dotted]
(5,0) circle (2pt) -- (8,0) circle (2pt)
node[pos=0, below]{$k$} node[pos=0.5, below] {$R_{n-k}$} ;
\draw[-Triangle, very thick](9, 1) -- (11, 1);
\filldraw[thick]
(12,0) circle(2pt) -- (13,1) circle (2pt)
node[pos=0.5]{\textcolor[rgb]{1.00,0.00,0.00}{\textbf{$\backslash$}}};
\filldraw[dotted]
(13,1) circle (2pt) -- node[align=center, above] {$M_{k-2}$} (16,1) circle (2pt);
\filldraw[thick]
(16,1) circle (2pt) -- (17,0) circle (2pt)
node[pos=0.5]{\textcolor[rgb]{1.00,0.00,0.00}{\textbf{$/$}}};
\filldraw[dotted]
(17,0) circle (2pt) -- (20,0) circle (2pt)
node[pos=0, below]{$k$} node[pos=0.5, below] {$R_{n-k}$};
\end{tikzpicture}
\caption{}
\label{figb1n}
\end{figure}

As for \eqref{MnRn}, it cannot be generalized to $\l\neq 1$ by means of \eqref{Motzlam}.
Indeed, the sequences of paths considered here are usually called $\l$-colored Motzkin words in combinatorics \cite{ST}.

Although the following result could be known to the experts, we add its proof since some of the tools used will be helpful in the sequel.
\begin{prop}
\label{recMnlam}
For each $n\geq2$ and $\l>0$
\begin{equation*}
M_n(\l)=\l^n+\sum_{k=0}^{n-2}\l^k\sum_{l=k+2}^n M_{n-l}(\l)M_{l-k-2}(\l)\,,
\end{equation*}
where $M_0(\l)=1, M_1(\l)=0$.
\end{prop}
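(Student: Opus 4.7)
The plan is to prove the recursion by a first-return/first-arch decomposition of Motzkin paths, weighted by the statistic $|\eps|_0$ (number of level-steps).

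First I would partition $\cam(n)$ according to whether the path contains an up-step. A Motzkin path in $\cam(n)$ with no up-step is necessarily the all-horizontal path $c^n$, which contributes weight $\l^n$ and accounts for the isolated term on the right-hand side. Every other path contains at least one up-step, so the next step is to locate the first one.

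For a Motzkin path with at least one up-step, let $k$ be the number of horizontal steps preceding the first up-step, so $0\le k\le n-2$, and let $l$ be the position of the down-step that matches this first up-step. Then $l\in\{k+2,\ldots,n\}$. I would decompose such a path as
\begin{equation*}
\eps \;=\; c^{k}\, a\, \eps_1\, b\, \eps_2,
\end{equation*}
where $\eps_1$ is the portion strictly between the matching up/down pair (a Motzkin path of length $l-k-2$ after shifting down by one, which is a weight-preserving bijection with $\cam(l-k-2)$), and $\eps_2\in\cam(n-l)$ is the remainder. Because all these factors are disjoint and the up-step and down-step carry weight $1$, the statistic $|\eps|_0$ factorises as $k + |\eps_1|_0 + |\eps_2|_0$, giving weight $\l^{k}\,\l^{|\eps_1|_0}\,\l^{|\eps_2|_0}$.

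Summing this over all choices $(k,l,\eps_1,\eps_2)$ and invoking the definition \eqref{Motzlam} yields
\begin{equation*}
\sum_{\eps\in\cam(n),\;\eps\neq c^n}\l^{|\eps|_0}
\;=\;\sum_{k=0}^{n-2}\l^{k}\sum_{l=k+2}^{n} M_{l-k-2}(\l)\,M_{n-l}(\l).
\end{equation*}
Adding the all-horizontal contribution $\l^n$ gives the stated recursion. The edge cases $l=k+2$ and $l=n$ are handled cleanly by the conventions $M_0(\l)=1$ and $M_1(\l)=0$ (the latter reflecting that an irreducible Motzkin path must have even length).

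There is no serious obstacle here: the argument is a standard weighted first-arch decomposition analogous to the Catalan/Dyck case, and the only point requiring a moment of care is verifying that the interior portion $\eps_1$ (which lives at height $\ge 1$) really is in bijection, by a vertical shift, with an arbitrary element of $\cam(l-k-2)$, preserving the level-step count.
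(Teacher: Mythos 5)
Your decomposition is exactly the paper's: peel off the $k$ initial level-steps, then the first irreducible factor (equivalently, your first up-step together with its matching down-step at position $l$, enclosing a shifted Motzkin path of length $l-k-2$), then the tail in $\cam(n-l)$, with the weight $\l^{|\eps|_0}$ factorising multiplicatively. That part is correct and needs no further comment. The one thing to fix is your closing parenthetical: it is \emph{not} true that an irreducible Motzkin path must have even length (e.g.\ $acb$ is irreducible of length $3$), and the quantity $M_1(\l)$ occurring in the recursion is not counting irreducible paths anyway --- it arises as $M_{n-l}(\l)$ or $M_{l-k-2}(\l)$ when the tail or the interior is a single level-step, which carries weight $\l$. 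Under the definition \eqref{Motzlam} one has $M_1(\l)=\l$, and that is the value required for the identity to hold (check $n=3$: the recursion must produce $M_3(\l)=\l^3+3\l$, which fails if $M_1(\l)=0$). The ``$M_1(\l)=0$'' in the statement appears to be a slip in the paper (plausibly imported from the Riordan numbers, where $R_1=0$); your bijective argument actually proves the correct recursion with $M_1(\l)=\l$, so do not invoke the stated convention to ``handle'' the edge cases --- it would contradict the decomposition you just gave.
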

\begin{proof}
Fix a Motzkin path of length $n\geq 2$. Then, it is uniquely decomposed into:

\medskip

\noindent - a path of $k$ level-steps, for $k=0,\ldots, n$;

\medskip

\noindent - an irreducible Motzkin path of length $l-k$, for $l=k+2,\ldots,n$ in the case $k\leq n-2$ (since the $k+1$th step is an up-step);

\medskip

\noindent - a Motzkin path of length $n-l$, in the case $l\leq n-1$.

\medskip

As above, each irreducible Motzkin path of length $l-k$ consists of an up step, a Motzkin path of length $l-k-2$, and a down step (see Figure \ref{figMn}). Therefore,
\begin{align*}
M_n(\l)&=\l^n+\sum_{k=0}^{n-2}\l^k\sum_{l=k+2}^n\sum_{\eps'\in\cam(l-k-2)}\l^{|\eps'|_0}\sum_{\eps''\in\cam(n-l)}\l^{|\eps''|_0} \\
&=\l^n+\sum_{k=0}^{n-2}\l^k\sum_{l=k+2}^n M_{l-k-2}(\l) M_{n-l}(\l)\,.
\end{align*}
\begin{figure}[h]
\centering
\begin{tikzpicture}[scale=0.40]
\filldraw[thick]
(0,0) circle (2pt) -- (1,0) circle (2pt);
\filldraw[dotted]
(1,0) circle (2pt) -- node[align=center, below] {$k$ level \\steps}  (5,0) circle (2pt);
\filldraw[thick]
(5,0) circle (2pt) --(6,0) circle (2pt) node[below]{$k$};
\filldraw[thick]
(6,0) -- (7,1) circle (2pt);
%node[pos=0.5,above]{$\car_{n-k}$};
\filldraw[dotted]
(7,1) circle (2pt) -- node[align=center, above] {Irreducible \\$M_{l-k}$} (10,1) circle (2pt);
%node[pos=0.5,above]{$\cam_{l-k-2}$};
\filldraw[thick]
(10,1) -- (11,0) circle (2pt);
\filldraw[dotted]
(11,0) circle (2pt) -- (14,0) circle (2pt)
node[pos=0, below]{$l$} node[pos=0.5, below] {$M_{n-l}$} ;
\draw[-Triangle, very thick](15, 1) -- (17, 1);
\filldraw[thick]
(18,0) circle (2pt) -- (19,0) circle (2pt);
\filldraw[dotted]
(19,0) circle (2pt) -- node[align=center, below] {$k$ level\\ steps}  (23,0) circle (2pt);
\filldraw[thick]
(23,0) circle (2pt) --(24,0) circle (2pt) node[below]{$k$};
\filldraw[thick]
(24,0) -- (25,1) circle (2pt)
node[pos=0.5]{\textcolor[rgb]{1.00,0.00,0.00}{\textbf{$\setminus$}}};
\filldraw[dotted]
(25,1) circle (2pt) -- node[ above] {$M_{l-k-2}$} (28,1) circle (2pt);
%node[pos=0.5,above]{$\cam_{l-k-2}$};
\filldraw[thick]
(28,1) -- (29,0) circle (2pt)
node[pos=0.5]{\textcolor[rgb]{1.00,0.00,0.00}{\textbf{$/$}}};
\filldraw[dotted]
(29,0) circle (2pt) -- (32,0) circle (2pt)
node[pos=0, below]{$l$} node[pos=0.5, below] {$M_{n-l}$} ;
\end{tikzpicture}
\caption{}
\label{figMn}
\end{figure}
\end{proof}
The Jacobi continued fraction \footnote{which happens to be periodic in this particular example.} and the bivariate generating function for the Motzkin numbers $(M_n(\l))$ is shown in \cite[p. 135]{F1} (with $z$ instead of $\l$):
$$
\cam^M_{\l}(z)= \frac{1}{1-\l z-\frac{z^2}{1-\l z-\frac{z^2}{\ddots}}}\,,
$$
or, equivalently
\begin{equation*}
\cam^M_{\l}(z)=\frac{1}{2z^2}\bigg(1-\l z-\sqrt{(1-\l z)^2-4z^2}\bigg)\,.
\end{equation*}
For the Cauchy transform, \eqref{AA} entails
\begin{equation}
\label{cauchyM}
\cg^M_{\l}(z)=\frac{z-\l\pm \sqrt{(z-\l)^2-4}}{2}\,.
\end{equation}
Here, $''+''$ has to be taken when $\text{Re}(z)\geq \l$, or $\text{Re}(z)<\l$ and $\text{Im}(z)<\text{Im}\sqrt{(z-\l)^2-4}$, whereas one takes $''-''$
in the remaining cases. Recalling that the Cauchy transform of the Wigner law is indeed
$$
\cg^M_{0}(z)=\frac{z\pm \sqrt{z^2-4}}{2}\,,
$$
\eqref{cauchyM} gives that the distribution measure $\m^M_{\l}$ induced by $(M_n(\l))$ results to be a shifted semicircle law. Namely,
\begin{equation*}
\m^M_{\l}(dx)=\frac{1}{2\pi}\sqrt{4-(x-\l)^2}\chi_{[\l-2,\l+2]}(x)\,.
\end{equation*}
In addition, the periodic continued fraction expansion of $\cg^M_{\l}(z)$  is the $J$-transform of the Jacobi matrix \cite{Ak}
$$
\left(
  \begin{array}{ccccccc}
    \l & 1 & 0 & 0 & 0 & 0 & \cdots  \\
    1 & \l & 1 & 0 & 0 & 0 &  \cdots  \\
    0 & 1 & \l & 1 & 0 & 0 & \cdots  \\
    0 & 0 & 1 & \l & 1 & 0 & \cdots   \\
     &  &  & \ddots & \ddots & \ddots    \\
  \end{array}
\right)
$$
which is the matrix representation of every $A_i+A^\dag_i+\l I$ w.r.t. the canonical basis $(e_i^{\otimes n})_{n\geq 0}$, where, as above, $e_i^{\otimes 0}=\Omega$ and $I$ denotes the identity of $\cb(\cf_{WM}(\ch))$. Therefore, $(M_n(\l))$ are the vacuum moments of $A_i+A^\dag_i+\l I$, $i\in \bn$.

\subsection{Distribution of the sum of an arbitrary number of operators}
\label{sec3.1}
Here we investigate the vacuum law for sums of at least 2 nonsymmetric position operators, and show that the moments depend on the number of some partitions introduced in Section \ref{prel}.

Before proving the announced result, we introduce the following technical
\begin{lem}
\label{lemikil}
Fix $n\geq1$, $\eps\in\{1,0,\dag\}^n_{+}$, $i_1,\ldots,i_n\in [m]$, and define
\begin{align}
\begin{split}
\label{twosf}
k &:=\min\{j\leq n\mid \eps(j)=1\}\\
l &:=\max\{i<k\mid \eps(i)=\dag\}\,.
\end{split}
\end{align}
Then, $A_{i_n}^{\eps(n)}\cdots A_{i_1}^{\eps(1)}$ vanishes unless $i_l=i_j=i_k$ for each $j\in[n]$ such that $l\leq j<k$.
\end{lem}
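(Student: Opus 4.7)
The plan is to trace the action of the product $A_{i_n}^{\eps(n)}\cdots A_{i_1}^{\eps(1)}$ on $\Om$ and examine precisely what happens at positions $l,l+1,\ldots,k$. For $0\leq j\leq n$ let
$$
v_j:=A_{i_j}^{\eps(j)}\cdots A_{i_1}^{\eps(1)}\Om\,,
$$
with the convention $v_0:=\Om$. If the full product does not vanish, then $v_j\neq 0$ for every $j$, so it suffices to show that $v_k\neq 0$ forces the stated equalities.

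First I would record what $l$ and $k$ force on the shape of $\eps$. Since $\eps\in\car(n)$, condition (3) from Section~\ref{moz} gives $\eps(1)=\dag$, so there is always at least one up-step strictly before position $k$, which makes $l$ well-defined. Moreover, by the definitions \eqref{twosf}, $\eps(l)=\dag$, $\eps(k)=1$, and $\eps(j)=0$ for every $l<j<k$ (there is no $\dag$ between $l$ and $k$ by maximality of $l$, and no $1$ by minimality of $k$).

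Next I would analyse the vector $v_l$. Since $\eps(l)=\dag$, we have $v_l=A^{\dag}_{i_l}v_{l-1}$, and by the definition of the WM creation operator the non-vanishing of $v_l$ guarantees that its leftmost tensor factor is $e_{i_l}$. Now I apply the preservation operators at positions $l+1,\ldots,k-1$. Recall $A^0_i=A^\dag_iA_i$ is the orthogonal projection onto $\gf_{WM}^{=i}(\ch)$, i.e.\ onto simple tensors whose leftmost factor is $e_i$. Therefore $A^0_{i_{l+1}}v_l\neq 0$ forces $i_{l+1}=i_l$, and in that case $v_{l+1}=v_l$. Iterating this observation for $j=l+1,\ldots,k-1$ yields $i_j=i_l$ and $v_j=v_l$ for all such $j$; in particular $v_{k-1}$ still has leftmost factor $e_{i_l}$.

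Finally, at position $k$ we apply $A_{i_k}$ to $v_{k-1}$. By the definition of the WM annihilation operator on simple tensors, the result is nonzero only if the leftmost factor of $v_{k-1}$ matches the test index, i.e.\ $i_k=i_l$. Chaining these equalities gives $i_l=i_{l+1}=\cdots=i_{k-1}=i_k$, which is the claim. The argument is essentially bookkeeping on the action of the three families of operators on weakly monotone tensors, and I do not anticipate any real obstacle; the only subtlety is to notice that Riordan paths guarantee the existence of $l$ and that between $l$ and $k$ the word $\eps$ is a string of $0$'s, which pins down the repetition of indices via the projection property of $A^0_i$.
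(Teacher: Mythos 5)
Your argument is correct in substance and rests on the same key observations as the paper's proof: that $\eps(1)=\dag$ makes $l$ well defined, that every letter strictly between $l$ and $k$ is a $0$, and that the preservation operators then force the indices to agree. The difference is one of level rather than of idea. The paper argues directly at the operator level: the subword $A_{i_k}A^0_{i_{k-1}}\cdots A^0_{i_{l+1}}A^{\dag}_{i_l}$ is annihilated by the adjacent-pair relations \eqref{cr} and \eqref{crcons} (namely $A_{i'}A^{\dag}_{i}=0$, $A^0_{i'}A^{\dag}_{i}=0$, $A^0_{i'}A^0_{i}=0$ and $A_{i'}A^0_{i}=0$ whenever $i'\neq i$), with \eqref{jgeqk} handling the bookkeeping; this yields the vanishing of the product as an operator, which is what the lemma literally asserts and what is invoked in the proof of Theorem \ref{mommgeq2} to rewrite subwords before they are pushed onto the vacuum. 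You instead track the vectors $v_j$ obtained by applying the partial products to $\Om$ and use the fact that $A^0_i$ is the projection onto tensors with leftmost factor $e_i$; this is clean and every step is valid (each $v_j$ is a basis vector or zero, so the ``leftmost factor'' bookkeeping is legitimate), but it only establishes that the product annihilates $\Om$, which is formally weaker than the operator identity claimed. The gap is harmless here, both because the lemma is only ever used inside $\om_{\Om}(\cdot)$ and because your key step already contains the fix: the same orthogonality that forces $i_{j+1}=i_j$ on the vector $v_j$ says that $A^0_{i_{j+1}}A^0_{i_j}$, $A^0_{i_{l+1}}A^{\dag}_{i_l}$ and $A_{i_k}A^0_{i_{k-1}}$ are the zero \emph{operator} when the indices differ, so one line upgrades your conclusion to the operator-level statement.
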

\begin{proof}
Indeed, when $l=k-1$, the thesis follows from \eqref{cr}, since here $j=l$. If $l<k-1$, then for any $j$ such that $l<j<k$, one finds $\eps(j)=0$. In this case \eqref{crcons}, \eqref{cr} and \eqref{jgeqk} give the desired result.
\end{proof}
Notice that for any $m\geq1$, $b_{m,0}(\l_m)=1$ and $b_{m,1}(\l_m)=0$.
\begin{thm}
\label{mommgeq2}
For each $m\geq1$ and $n\geq2$, one has
\begin{equation}
\label{bm4}
b_{m,n}(\l_m)=\sum_{\eps\in ANC^{wm}([m],n)}\l_m^{|\eps|_0},
\end{equation}
where $\l>0$, and $|\eps|_0$ denotes the number of singletons of the partition $\eps$.
In addition,
\begin{equation}
\label{momm}
b_{m,n}(\l_m)=\sum_{k=2}^n b_{m,n-k}(\l_m)\sum_{l=1}^m\bigg( b_{m-l+1,k-2}(\l_m)+ \a_{k,3}\sum_{j=1}^{k-2}\widetilde{b}_{m-l+1,k-2-j}(\l_m)\bigg)\,,
\end{equation}
where, for any $j=1,\ldots, k-2$, $\widetilde{b}_{m-l+1,k-2-j}(\l_m)$ is defined as
$$
b_{m-l+1,k-2-j}(\l_m)\bigg(\a_{[\frac{k-2-j}{2}],1}\sum_{r=1}^{[\frac{k-2-j}{2}]}\binom{r+j}{j}\l_m^j
+\d_{[\frac{k-2-j}{2}],0}\l_m^j\bigg)\,.
$$
\end{thm}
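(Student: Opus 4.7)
My plan is to first establish the partition-sum representation (\ref{bm4}) by expanding $T_m(\l_m)^n$ and carefully applying the algebra relations, and then derive the recursion (\ref{momm}) by decomposing each $\pi \in ANC^{wm}([m], n)$ according to its outer block containing the position $1$.

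\textbf{Step 1: expansion and non-vanishing sequences.} Expanding multilinearly,
\[
T_m(\l_m)^n = \sum_{\eps \in \{1,0,\dag\}^n} \sum_{i_1, \ldots, i_n \in [m]} \l_m^{|\eps|_0}\, A_{i_n}^{\eps(n)} \cdots A_{i_1}^{\eps(1)}.
\]
For the vacuum inner product to be nonzero, the operator string must map $\Om$ back to a nonzero multiple of $\Om$. Applying the operators to $\Om$ from right to left and tracking the resulting tensor, the relations (\ref{cr}), (\ref{crcons}), (\ref{jgeqk}) together with Lemma \ref{lemikil} force: (i) $\eps$ is a Motzkin path with no level step at height $0$ (since $A_i^0 \Om = 0$); (ii) each matched $\dag$/$1$ pair carries equal indices; (iii) the $\a$-constraint in $A_i^\dag$ forces each new creation to have label at least the current top of the state, producing a weakly monotone ordering of blocks; and (iv) each conservation step at height $>0$ must share its index with the pair immediately surrounding it.

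\textbf{Step 2: bijection with $ANC^{wm}([m], n)$.} Conditions (i)--(iv) reproduce exactly the defining properties of $ANC^{wm}([m], n)$: matched $\dag$/$1$ pairs are two-element blocks, level steps are singletons that are non-outer and subject to the $NCLP_s$ covering rule, and labels are weakly monotone. Each admissible sequence contributes $\l_m^{|\eps|_0}$, which gives (\ref{bm4}).

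\textbf{Step 3: first-block decomposition.} For (\ref{momm}), fix $\pi \in ANC^{wm}([m], n)$ and let $\{1, k\}$ with $2 \le k \le n$ be the block containing $1$; this must be a pair since outer singletons are forbidden. Let $l \in [m]$ denote its label. Then $\pi$ splits into its restriction to $[k+1, n]$, which lies in $ANC^{wm}([m], n-k)$ and contributes the factor $b_{m, n-k}(\l_m)$, and its restriction to the interior $[2, k-1]$, whose counting depends on $l$.

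\textbf{Step 4: counting the interior.} Inside $\{1, k\}$ every block must carry a label $\ge l$ by weak monotonicity, and by the $NCLP_s$ condition any singleton directly covered by $\{1, k\}$ must itself carry label $l$. After re-indexing the label set $[l, m]$ as $[m-l+1]$: when the interior has no top-level singleton the count reduces to $b_{m-l+1, k-2}(\l_m)$. When $j \ge 1$ top-level singletons are present---which forces $k \ge 3$ and accounts for the factor $\a_{k, 3}$---one first chooses a non-singleton-outer partition on the remaining $k - 2 - j$ positions, yielding the factor $b_{m-l+1, k-2-j}(\l_m)$ and a number $r$ of top-level pairs, then distributes the $j$ identical singletons of label $l$ among the $r + 1$ gaps delimited by the top-level pairs via stars-and-bars, giving the multiplicity $\binom{r+j}{j}$ with weight $\l_m^j$ for the $j$ singletons. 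Summing $r$ from $1$ to $[\frac{k-2-j}{2}]$ (the maximum possible top-level pair count), with the degenerate case $[\frac{k-2-j}{2}] = 0$ handled by the $\d$-term, assembles precisely $\widetilde{b}_{m-l+1, k-2-j}(\l_m)$. Summing over $k$, $l$, and $j$ yields (\ref{momm}).

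\textbf{Main obstacle.} The principal difficulty is the combinatorial bookkeeping of Step 4: one must carefully separate top-level singletons of the interior (all forced by $NCLP_s$ to carry the label $l$ of the surrounding pair) from singletons that are nested more deeply (whose labels are recursively enforced and absorbed into $b_{m-l+1, k-2-j}$), and show that the stars-and-bars factor $\binom{r+j}{j}$, after summation over the top-level pair count $r$, exhausts every admissible weakly monotone configuration of the interior without double counting. Verifying this insertion rule, and matching it against the precise form of $\widetilde{b}_{m-l+1, k-2-j}(\l_m)$, is the main delicate point of the argument.
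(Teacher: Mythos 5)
Your proposal follows essentially the same route as the paper: the same operator expansion over Riordan sequences reduced via Lemma \ref{lemikil} and the commutation relations to obtain \eqref{bm4}, and the same decomposition into the first irreducible factor (equivalently, the pair block containing $1$), relabeling of the interior over $[l,m]\cong[m-l+1]$, and stars-and-bars insertion of the $j$ covered singletons into the $r+1$ gaps to obtain \eqref{momm}. The combinatorial point you flag as the main obstacle is treated at the same level of detail in the paper itself, so nothing essential is missing relative to the published argument.
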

\begin{proof}
We first note that for each $m\geq2$, $n\geq0$, and $\l>0$
\begin{equation}
\label{bmn2}
b_{m,n}(\l_m)=\sum_{\eps\in\{1,0,\dag\}^n_{+}}\sum_{i_1,\ldots,i_n=1}^m  \om_{\Om}(A_{i_n}^{\eps(n)}\cdots A_{i_1}^{\eps(1)})\l_m^{|\eps|_0}\,.
\end{equation}
By the definition of $\{1,0,\dag\}^n_+$, any partition $\eps$ on the right hand side of \eqref{bmn2} has no outer singletons. As previously pointed out, $\eps=\eps' \sqcup \eps''$, where $\eps''$ is a noncrossing pair partition, and now $\eps'$ collects the preservation operators organized in singletons. This gives $\eps\in NCI_{2,1}(n)$. Moreover, the Riordan path $\eps$ is uniquely decomposed into irreducible paths, the first one being $\{\dag,\eps(2),\ldots,\eps(j-1),1\}$, for some $j=2,\ldots n$. Lemma \ref{lemikil} and \eqref{jgeqk} give that the sequence
\begin{equation*}
A^{1}_{i_j}A^{\eps(j-1)}_{i_{j-1}}\cdots A^{\eps(2)}_{i_{2}}A^{\dag}_{i_1}
\end{equation*}
reduces to
$$
\d_{i_l,i_k}A^{1}_{i_j}A^{\eps(j-1)}_{i_{j-1}}\cdots A^{1}_{i_k}A^{\dag}_{i_l}\cdots A^{\dag}_{i_1}\,,
$$
for $l$ and $k$ defined as in \eqref{twosf}. By \eqref{alpha}, the above expression turns out to be equal to
\begin{equation}
\label{***}
\d_{i_l,i_k}\a_{i_{l},i_{l-1}}A^{1}_{i_j}A^{\eps(j-1)}_{i_{j-1}}\cdots A^{\eps({k+1})}_{i_{k+1}}A^{\eps({l-1})}_{i_{l-1}}\cdots A^{\dag}_{i_1}\,.
\end{equation}
Now both $\eps(l-1)$ and $\eps(l-2)$ belong to $\{1,\dag\}$, and the following cases may appear
\begin{align*}
\begin{split}
& A^0_{i_{l-1}}A^{\dag}_{i_{l-2}}=\d_{i_{l-1},i_{l-2}}A^{\dag}_{i_{l-2}} \\
& A^{\dag}_{i_{l-1}}A^{\dag}_{i_{l-2}}=\a_{i_{l-1},i_{l-2}}A^{\dag}_{i_{l-1}}A^{\dag}_{i_{l-2}} \\
& A^0_{i_{l-1}}A^{0}_{i_{l-2}}=\d_{i_{l-1},i_{l-2}}A^0_{i_{l-1}} \\
& A^{\dag}_{i_{l-1}}A^{0}_{i_{l-2}}=\a_{i_{l-1},i_{l-2}}A^{\dag}_{i_{l-1}}A^{0}_{i_{l-2}}\,.
\end{split}
\end{align*}
After replacing in \eqref{***} and iterating the procedure, one finds that any irreducible path realizes a partition which is weakly monotone ordered, where any block of cardinality 1 inherits the labeling from the block which covers. Since by Lemma \ref{lem2.1WM} $\om_{\Om}(A_{i_n}^{\eps(n)}\cdots A_{i_1}^{\eps(1)})=1$, \eqref{bm4} follows.

In the aforementioned decomposition for $\eps\in ANC^{wm}([m],n)$, consider the first irreducible path of length $k=2,\ldots, n$, and take the remaining path of length $n-k$. Consequently, if $ANC^{\overline{wm}}([m],k)$ is the subset of irreducible partitions $\pi=\{B_1,\ldots,B_p\}$ in $ANC^{wm}([m],n)$, from \eqref{bm4} one finds
\begin{align*}
b_{m,n}(\l_m)&=\sum_{k=2}^n\sum_{\eps'\in ANC^{\overline{wm}}([m],k)}\l^{|\eps'|_0}_m\sum_{\eps''\in ANC^{wm}([m],n-k)}\l^{|\eps''|_0}_m \\
&=\sum_{k=2}^nb_{m,n-k}(\l_m)\sum_{\eps'\in ANC^{\overline{wm}}([m],k)}\l^{|\eps'|_0}_m\,.
\end{align*}
We label now $L(B_1)=l$, $l=1,\ldots,m$. The weakly monotone ordering gives that for each $h=2,\ldots, p$, $L(B_h)=l+s$ for a suitable $s\in \{0,\ldots, m-l\}$. Removing $B_1$ provides a natural identification with a (unique) partition in $ANC^{wm}([l,m],k-2-j)$, where $j$ is the number of singletons. Consequently, for $j=0$ \eqref{momm} follows from \eqref{bm4}, since in this case
$$
ANC^{\overline{wm}}([m],k)=\bigsqcup_{l=1}^m ANC^{wm}([m-l+1],k-2)\,,
$$
where we identified $[m-l+1]$ with $[l,m]$.
If instead $j\geq 1$, denote by $r$ the number of outer blocks with cardinality 2 in the partition. Then the indistinguishable outer singletons may occupy $r+1$ positions, and \eqref{momm} is verified using again \eqref{bm4}.
\end{proof}
The computation of the moment generating function and the Cauchy transform for $(b_{m,n}(\l_m))$ appears complicated when $m\geq 2$, as a consequence of the recurrence relation \eqref{momm}.

In particular, when we replace $\l_m$ simply with $\l$ for any $m$, the vacuum law of the sum of $m$ nonsymmetric position operators is the $m$-fold monotone convolution of $\m_{1,\l}$. Indeed, by means of Theorem 2.2 in \cite{CGW}, the random variables $G_i+\l A^0_i$, $i=1,\ldots, m$ are monotone independent. Also in this case, the (reciprocal) Cauchy transform appears not to be simple to handle. As an example, in the easiest case $m=2$, since Theorem 3.1 in \cite{Mu}, one finds $\cg_{2,\l}(z)=\cg_{1,\l}(\frac{1}{\cg_{1,\l}(z)})$. This gives
$$
\cg_{2,\l}(z)=\frac{1+\l \cg_{1,\l}(z)}{2\big(\l+\cg_{1,\l}(z)\big)}\mp\frac{\sqrt{\bigg(\frac{1}{\cg_{1,\l}(z)}-\l\bigg)^2-4}}{2\bigg(\frac{\l}{\cg_{1,\l}(z)}+1\bigg)}\,, \quad z\in\bc^+
$$
as a consequence of  \eqref{cauchyR}.

\section{central limit theorem}
\label{sec5}
In this section we provide a central limit theorem for the operator $T_m(\l_m)$. More in detail, we find that the limit law belongs to the free Meixner class.
\begin{thm}
\label{clt}
For any $\l>0$, the vacuum law of
$$\frac{T_m(\l_m)}{\sqrt{m}}=\frac{G_1+\ldots + G_m+\l_m(A^0_1+\ldots +A^0_m)}{\sqrt{m}}
$$
weakly converges for $m\rightarrow \infty$ to the free Meixner distribution $\nu_\l$ with parameters $\displaystyle\bigg(\l,\frac{1}{2},2,0\bigg)$. Namely,
\begin{equation}
\label{cc}
\nu_\l=\frac{\l}{\sqrt{\l^2+2}}\d_{\l-\sqrt{\l^2+2}}+\nu_\l^{\text{a.c.}}\,,
\end{equation}
where the absolutely continuous part w.r.t. the Lebesgue measure $\nu_\l^{\text{a.c.}}$ has the density
\begin{equation}
\label{ccc}
\nu_\l^{\text{a.c.}}(dx)=\frac{\sqrt{2-(x-\l)^2}}{\pi(\l^2+2-(x-\l)^2)}\chi_{(\l-\sqrt2,\l+\sqrt2)}(x)\,.
\end{equation}
\end{thm}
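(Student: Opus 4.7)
The plan is to exploit the monotone independence of the family $\{G_k+\l_m A^0_k\}_{k=1}^m$ (noted in the discussion following Theorem \ref{mommgeq2}) together with the composition structure of monotone convolution. Denote by $F_{1,\l_m}(z):=1/\cg_{1,\l_m}(z)$ the reciprocal Cauchy transform of $\m_{1,\l_m}$, with $\cg_{1,\l_m}$ given by \eqref{cauchyR} (with $\l$ replaced by $\l_m$). Since monotone convolution corresponds to composition of reciprocal Cauchy transforms, the vacuum distribution of $T_m(\l_m)$ has reciprocal Cauchy transform $F_{1,\l_m}^{\circ m}$, and the scaling $S_m(\l)=T_m(\l_m)/\sqrt{m}$ yields $F_{S_m(\l)}(z)=F_{1,\l_m}^{\circ m}(\sqrt{m}\,z)/\sqrt{m}$.

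The first key step is an algebraic identity. Rationalizing \eqref{cauchyR} gives
$$
F_{1,\l_m}(z)=\frac{z+\l_m+\sqrt{(z-\l_m)^2-4}}{2}=\l_m+F_\sigma(z-\l_m)\,,
$$
where $F_\sigma(w)=(w+\sqrt{w^2-4})/2$ is the reciprocal Cauchy transform of the standard Wigner semicircle $\sigma$. An immediate induction on $m$ then gives
$$
F_{1,\l_m}^{\circ m}(z)=\l_m+F_\sigma^{\circ m}(z-\l_m)\,,
$$
and after substituting $\l_m=\l\sqrt{m}$ and rescaling,
$$
F_{S_m(\l)}(z)=\l+\frac{1}{\sqrt{m}}F_\sigma^{\circ m}\bigl(\sqrt{m}(z-\l)\bigr)\,.
$$

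Next I would invoke Muraki's monotone central limit theorem applied to $\sigma$, which has mean $0$ and variance $1$: the rescaled monotone convolution powers $D_{1/\sqrt{m}}(\sigma^{\rhd m})$ converge weakly to the arcsine law on $[-\sqrt{2},\sqrt{2}]$, whose reciprocal Cauchy transform is $\sqrt{z^2-2}$. In terms of $F$-transforms this reads $F_\sigma^{\circ m}(\sqrt{m}\,w)/\sqrt{m}\to\sqrt{w^2-2}$ locally uniformly on $\bc^+$. Consequently
$$
F_{S_m(\l)}(z)\longrightarrow \l+\sqrt{(z-\l)^2-2}=:F_{\nu_\l}(z)\,,
$$
and the Stieltjes continuity theorem gives weak convergence of the underlying distributions to the unique probability measure $\n_\l$ with reciprocal Cauchy transform $F_{\nu_\l}$.

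To finish, I would identify $\n_\l$ explicitly. Comparing $\cg_{\nu_\l}(z)=1/F_{\nu_\l}(z)$ with \eqref{cauchysayo} shows that $\n_\l$ is the free Meixner law with parameters $(a,b,c,\a)=(\l,1/2,2,0)$, for which $f(z)=\l^2+2-(z-\l)^2$. The roots of $f$ are $x_{1,2}=\l\mp\sqrt{\l^2+2}$, both lying outside $[\l-\sqrt{2},\l+\sqrt{2}]$. A short computation from \eqref{gammai} gives $\g_1=\l/\sqrt{\l^2+2}$ at $x_1=\l-\sqrt{\l^2+2}$, while at $x_2$ the quantity inside $(\cdot)_+$ is negative, so $\g_2=0$; the a.c. density \eqref{ccc} is then read off from \eqref{ac}. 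The main technical obstacle I foresee is handling the branch of the square root consistently (so that $F_{\nu_\l}$ maps $\bc^+$ into itself) and making precise the $F$-transform formulation of the monotone CLT used in the second step.
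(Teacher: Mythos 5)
Your proposal is correct, but it takes a genuinely different route from the paper. The paper proves the theorem combinatorially: starting from the moment formula $b_{m,n}(\l_m)=\sum_{\eps\in ANC^{wm}([m],n)}\l_m^{|\eps|_0}$ of Theorem \ref{mommgeq2}, it decomposes each partition into irreducible factors, identifies the terms surviving the $m\to\infty$ scaling (those with injectively labelled pair blocks, counted by $(2k_h-1)!!$), resums the resulting generating function, and only then reads off the Cauchy transform $\cg_{\nu_\l}(z)=(\l\pm\sqrt{(z-\l)^2-2})^{-1}$. You instead work entirely on the analytic side: the conjugation identity $F_{1,\l}(z)=\l+F_\sigma(z-\l)$ (which I checked; rationalizing \eqref{cauchyR} does give $F_{1,\l}(z)=\tfrac{1}{2}(z+\l+\sqrt{(z-\l)^2-4})$), the resulting closed form $F_{S_m(\l)}(z)=\l+m^{-1/2}F_\sigma^{\circ m}(\sqrt{m}(z-\l))$, and Muraki's monotone CLT for the semicircle law. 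This is shorter and more conceptual --- it explains the limit as a shifted conjugate of the arcsine law and, incidentally, the same conjugation trick works verbatim for the monotone Fock space case of Section \ref{secfin}, since $F_\rho(z)=\l+F_{\frac{1}{2}(\d_{-1}+\d_1)}(z-\l)$ there as well. What it costs is reliance on two external inputs the paper's argument avoids: the identification of the law of $T_m(\l_m)$ as an $m$-fold monotone convolution (Theorem 2.2 of \cite{CGW} plus Muraki's composition formula, which the paper only invokes informally after Theorem \ref{mommgeq2}), and a locally-uniform $F$-transform formulation of the monotone CLT together with the Stieltjes continuity theorem; for the latter you could equally cite the $\l=0$ case already proved in \cite{CGW}. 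Your final identification of the free Meixner parameters and of the atom is correct and agrees with \eqref{cc}--\eqref{ccc} (the paper's labels $\g_1,\g_2$ are swapped relative to yours, but the measure is the same). The one point you should make explicit is the branch choice guaranteeing $F_{\nu_\l}(\bc^+)\subseteq\bc^+$, which you already flag.
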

\begin{proof}
As for any $m\geq 1$ $T_m(\l_m)$ is selfadjoint, it is enough to prove that the convergence is in the sense of moments to $\nu_\l$, \emph{i.e.}
$$
\lim_{m\rightarrow \infty} \frac{1}{\sqrt{m^n}}b_{m,n}(\l_m)=\int_{\br}x^n \di{\nu_\l}(x),
$$
for any $n\geq 2$. From Theorem \ref{mommgeq2} it follows
\begin{equation}
\label{cl1}
\frac{1}{\sqrt{m^n}}b_{m,n}(\l_m)=\frac{1}{\sqrt{m^n}}\sum_{\eps\in ANC^{wm}([m],n)}\l_m^{|\eps|_0}\,.
\end{equation}
As usual, any path in $ANC^{wm}([m],n)$ is uniquely decomposed into $j=1,\ldots, [\frac{n}{2}]$ irreducible paths of size $q_h$, $h=1,\ldots, j$. The latter indeed realize a partition $\pi^{(j)}\in V(n)$ whose corresponding composition of $n$ is $(q_1,\ldots,q_j)$. Therefore, \eqref{cl1} gives
\begin{equation}
\label{cl2}
\frac{1}{\sqrt{m^n}}b_{m,n}(\l_m)=\frac{1}{\sqrt{m^n}}\sum_{j=1}^{[\frac{n}{2}]}\,\,\,\sum_{\pi^{(j)}\in V(n)}\,\,\,\prod_{h=1}^j\,\,\,\sum_{\eps_h\in ANC^{wm}([m],q_h)}\l_m^{|\eps_h|_0}\,.
\end{equation}
For any $h$, let $k_h+1$ be the number of blocks of cardinality 2 of $\eps_h$. Thus, the number of singleton blocks is $q_h-2-2k_h$, and they are labelled as any block they cover. The right hand side of \eqref{cl2} is then
$$
\frac{1}{\sqrt{m^{n}}}\sum_{j=1}^{[\frac{n}{2}]}\sum_{\pi^{(j)}\in V(n)}\prod_{h=1}^j\sum_{k_h=0}^{[\frac{q_h-2}{2}]}\sum_{l=1}^{k_h+1}\binom{m}{l}\binom{q_h-2}{2k_h}
\l^{q_h-2-2k_h}m^{\frac{q_h}{2}-1-k_h}|\ca_{q_h,l}|\,.
$$
Here, $l$ is the cardinality of the range of the label function $L_h:\eps_h\rightarrow [m]$, and $\ca_{q_h,l}$ is the set of all the irreducible paths of length $q_h$ with $l$ blocks. Taking the limit for $m\rightarrow\infty$ all the terms except $l=k_h+1$ vanish, and one has
$$
\lim_{m\rightarrow \infty} \frac{1}{\sqrt{m^n}}b_{m,n}(\l_m)=\sum_{j=1}^{[\frac{n}{2}]}\sum_{\pi^{(j)}\in V(n)}\prod_{h=1}^j\sum_{k_h=0}^{[\frac{q_h-2}{2}]}\binom{q_h-2}{2k_h}
\l^{q_h-2-2k_h}\frac{|\ca_{q_h,k_h+1}|}{(k_h+1)!}\,.
$$
Every path above is irreducible. Then, by usual arguments, for the computation of $|\ca_{q_h,k_h+1}|$ we reduce to the case of a partition $\widetilde{\eps}_h$ of $k_h$ blocks of cardinality 2. As the range of the label function $L_{\widetilde{\eps}_h}$ has cardinality $k_h$, without loss of generality we suppose $\text{Range}(L_{\widetilde{\eps}_h})=[k_h]$. Under the above assumptions, there is a unique block $B_{h_v}$ in $\widetilde{\eps}_h$ connecting two index consecutive elements and such that $L_{\widetilde{\eps}_h}(B_{h_v})=k_h$. Thus, $B_{h_v}$ can be chosen in $(2k_h-1)$ ways, and the same argument holds for $\widetilde{\eps}_h\setminus \{B_{h_v}\}$. Consequently, an iteration procedure gives that $|\ca_{q_h,k_h+1}|=(2k_h-1)!!$. Recalling that $A_n:=\frac{(2n-1)!!}{n!}$ is the $2n$th moment of the standard arcsine law \cite{[Lu95a], Mur}, one has
$$
\lim_{m\rightarrow \infty} \frac{1}{\sqrt{m^n}}b_{m,n}(\l_m)=\sum_{j=1}^{[\frac{n}{2}]}\sum_{\pi^{(j)}\in V(n)}\prod_{h=1}^j\sum_{k_h=0}^{[\frac{q_h-2}{2}]}\binom{q_h-2}{2k_h}
\l^{q_h-2-2k_h}\frac{A_{k_h}}{k_h+1}\,.
$$
Therefore, the moment generating function for the measure $\nu$ is
\begin{equation*}
\mathcal{M}_{\nu_{\l}}(z)= 1+\sum_{n=2}^{\infty}\left(\sum_{j=1}^{[\frac{n}{2}]}\sum_{\pi^{(j)}\in V(n)}\prod_{h=1}^j\sum_{k_h=0}^{[\frac{q_h-2}{2}]}\binom{q_h-2}{2k_h}
\l^{q_h-2-2k_h}\frac{A_{k_h}}{k_h+1}\right)z^n
\end{equation*}
in its set of convergence.
After replacing any $\pi^{(j)}\in V(n)$ with its corresponding composition $(q_1,\ldots,q_j)$ of $n$, the identity $z^n=\prod_{h=1}^j z^{q_h}$ allows to reduce the right hand side as follows
$$
1+\sum_{j=1}^{\infty}\left(\sum_{q=2}^{\infty}\sum_{k_1=0}^{[\frac{q-2}{2}]}\binom{q-2}{2k_1}
\l^{q-2-2k_1}\frac{A_{k_1}}{k_1+1}z^{q}\right)^j\,,
$$
and $q$ denotes any of the $q_h$, $h=1,\ldots,j$.
This means that
\begin{equation}
\label{cl3}
\mathcal{M}_{\nu_\l}(z)=\frac{1}{1-\ck_{\l}(z)}\,,
\end{equation}
where
$$
\ck_{\l}(z):=z^2\sum_{r=0}^{\infty}\left(\sum_{k=0}^{[\frac{r}{2}]}\binom{r}{2k}\,
\l^{r-2k}\frac{A_{k}}{k+1}\right)z^r\,.
$$
As $\frac{A_{k}}{k+1}=\frac{C_{k}}{2^k}$, $(C_k)$ being as usual the Catalan sequence, one has
\begin{align*}
\ck_{\l}(z)=&z^2\sum_{k=0}^{\infty}\frac{C_{k}}{(2\l^2)^k}\sum_{r=2k}^{\infty}\binom{r}{2k}(\l z)^r \\
=&\frac{z^2}{1-\l z}\sum_{k=0}^{\infty}C_{k}\bigg(\frac{z}{\sqrt2(1-\l z)}\bigg)^{2k}\,,
\end{align*}
and in the last equality we used the well known identity
$$
\sum_{r=k}^{\infty}\binom{r}{k}z^r=\frac{z^{k}}{(1- z)^{k+1}}
$$
(see, \emph{e.g.} \cite{GKP}).
Recalling that the moment generating function of the standard Wigner law is
$$
\sum_{k=0}^{\infty}C_{k}z^{2k}=\frac{1-\sqrt{1-4z^2}}{2z^2}\,,
$$
(see, \emph{e.g.} \cite{CGW}),
it turns out that
$$
\ck_{\l}(z)=(1-\l z)\mp\sqrt{(1-\l z)^2-2z^2}\,.
$$
From \eqref{cl3}, after taking into account that $\lim_{z\to0} \cf_{\nu_\l}(z)= 1$, one has that the moment generating function is
\begin{equation}
\label{mgfnu}
\cam_{\nu_\l}(z)=\frac{1}{\l z+\sqrt{(1-\l z)^2-2z^2}}\,, \quad z\in\bc\,.
\end{equation}
By \eqref{AA}, \eqref{mgfnu} entails
$$
\cg_{\nu_\l}(z)=\frac{1}{\l\pm \sqrt{(z-\l)^2-2}}\,.
$$
The previous formula represents the Cauchy transform of the free Meixner law with parameters $\displaystyle\bigg(\l,\frac{1}{2},2,0\bigg)$. Thus, $$
f(x)=\l^2+2-(x-\l)^2
$$
has two real roots $x_1=\l-\sqrt{\l^2+2}$, and $x_2=\l+\sqrt{\l^2+2}$. By \eqref{gammai}, one finds $\g_1=0$ and $\g_2=\frac{\l}{\sqrt{\l^2+2}}$.\\
As a consequence, from \eqref{muD} and \eqref{ac} one has \eqref{cc} and \eqref{ccc}.
\end{proof}

In Figure \ref{fig:limitdistribution} we report the plots of $\nu_\l$ for $\l=1$ and $\l=4$.
\begin{figure}[h]
\centering
\subfloat[][\emph{Limit distribution for $\l=1$}.]
{\includegraphics[width=.45\textwidth]{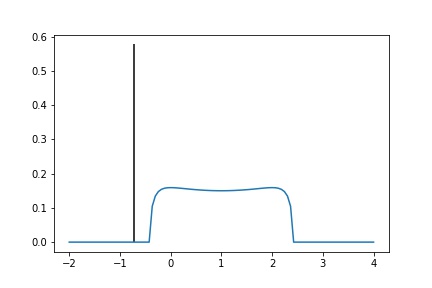}} \quad
\subfloat[][\emph{Limit distribution for $\l=4$}.]
{\includegraphics[width=.45\textwidth]{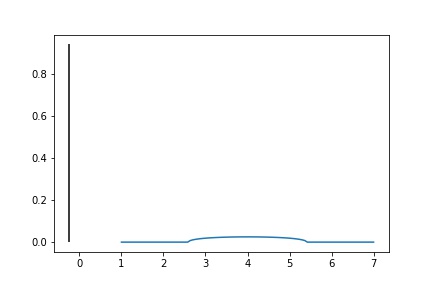}}
\caption{Limit distribution for $\l=1$ and $\l=4$.}
\label{fig:limitdistribution}
\end{figure}

\section{the case of monotone fock space}
\label{secfin}
The previously achieved results can be translated, up to modifications, to the case of monotone Fock space. Some of the following properties have been already presented in \cite{Mur1999}. We add them here for the sake of completeness, and to emphasise their role in our approach.

For our aim it appears at first useful to recall some notions about discrete monotone Fock space, the reader being referred to \cite{CFL1,CFL,Mur} for further details.

Let $\ch$ be a separable Hilbert space, and $(e_i)_{i\geq 1}$ a fixed orthonormal basis. The \emph{monotone Fock space} over $\ch$, in the sequel denoted by $\gf_{M}(\ch)$, is the closed subspace of the full Fock space $\gf(\ch)$ spanned by $\Om$, $\ch$ and all the simple tensors of the form $e_{i_k}\otimes e_{i_{k-1}}\otimes \cdots\otimes e_{i_1}$, where $i_k> i_{k-1}>\cdots> i_1$,
and $k\geq2$.
Let $(i_1,i_2,\ldots,i_k)$ be a strictly decreasing sequence of natural integers. The generic element of the canonical basis of $\gf_M$ is denoted by $e_{(i_1,i_2,\ldots,i_k)}$. Very often, we write $e_{(i)}$ as $e_i$ to simplify the notations.
The monotone creation and annihilation operators with test function $e_i$ for any $i\in \mathbb{N}$, are denoted by $a^\dag_i:=a^\dag(e_i)$ and $a_i:=a(e_i)$, respectively. They are given by $a^\dag_i\Om=e_i$, $a_i\Om=0$ and
\begin{equation*}
a^\dagger_i e_{(i_1,i_2,\ldots,i_k)}:=\left\{
\begin{array}{ll}
e_{(i,i_1,i_2,\ldots,i_k)} & \text{if}\, i> i_1 \\
0 & \text{otherwise}, \\
\end{array}
\right.
\end{equation*}
\begin{equation*}
\,\,\,\,\,\,\,\,\,\,\,\,\,\,\,\,\,\,a_ie_{(i_1,i_2,\ldots,i_k)}:=\left\{
\begin{array}{ll}
e_{(i_2,\ldots,i_k)} & \text{if}\, k\geq 1,\,\,\,\,\,\, \text{and}\,\,\,\,\,\, i=i_1\\
0 & \text{otherwise}. \\
\end{array}
\right.
\end{equation*}
One can check that both $a^\dagger_i$ and $a_i$ have unital norm, are mutually adjoint, and satisfy the following relations
\begin{equation}
\label{comrul}
\begin{array}{ll}
  a^\dagger_ia^\dagger_j=a_ja_i=0 & \text{if}\,\, i\leq j\,, \\
  a_ia^\dagger_j=0 & \text{if}\,\, i\neq j\,.
\end{array}
\end{equation}
As in the weakly monotone case, we introduce the preservation operator $a_i^0:=a_i^\dag a_i$, $i\in\bn$, and recall that Lemma 5.4 in \cite{CFL1} gives
$$
a_ka_ja^\dag_j=\d_{k)}(j)a_k\,, \quad a_ja_ka^\dag_k =\d_{k)}(j)a^\dag_k\,,
$$
where
$$
\d_{k)}(j):=\begin{cases}
1 & \text{if $j<k$} \\
0 & \text{otherwise}\,,
\end{cases}
$$
and further, for $j\leq k$
$$
a_ja^\dag_ja_k=a_k\,, \quad a^\dag_ka_ja^\dag_j=a^\dag_k\,.
$$
We deal with the distribution in the vacuum state of the operator
$$
P_m(\l_m):=\sum_{i=1}^m a_i+a_i^\dag + \l_m a^0_i\,,
$$
where $m\geq 1$, $\l>0$ and $\l_m:=\l\sqrt{m}$. As usual, the law of $P_m(\l_m)$ is determined by the moments, and we introduce the following notation
$$
g_{m,n}(\l_m):=\om_{\Om}((P_m(\l_m))^n)\,.
$$
We start from the case $m=1$.
\begin{prop}
\label{mon1}
For any $\l>0$ the vacuum distribution of $P_1(\l)$ is the two-points law
\begin{equation}
\label{monlaw}
\r=\bigg(\frac{1}{2}-\frac{\l}{2\sqrt{\l^2+4}}\bigg)\d_{\frac{\l+\sqrt{\l^2+4}}{2}} + \bigg(\frac{1}{2}+\frac{\l}{2\sqrt{\l^2+4}}\bigg)\d_{\frac{\l-\sqrt{\l^2+4}}{2}}\,.
\end{equation}
\end{prop}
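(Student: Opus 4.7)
The central observation is that, on the monotone Fock space $\gf_M(\ch)$, the cyclic subspace generated by $\Omega$ under the algebra of $a_1, a_1^\dag, a_1^0$ is only two-dimensional. Indeed, the strict-inequality condition $i > i_1$ in the definition of $a_1^\dag$ forces $a_1^\dag e_1 = 0$, so $V := \spn\{\Omega, e_1\}$ is invariant under each of $a_1$, $a_1^\dag$, $a_1^0$, and hence under $P_1(\l)$. Consequently, the vacuum distribution of $P_1(\l)$ on $\gf_M(\ch)$ coincides with the vacuum distribution of the restriction $P_1(\l)|_V$, and the problem reduces to a $2\times2$ linear-algebra computation.

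The plan is therefore: first verify the invariance of $V$ and compute, from the formulas $a_1\Omega=0$, $a_1 e_1 = \Omega$, $a_1^\dag \Omega = e_1$, $a_1^0 e_1 = e_1$, the matrix representation
\[
P_1(\l)|_V \;=\; \begin{pmatrix} 0 & 1 \\ 1 & \l \end{pmatrix}
\]
in the orthonormal basis $(\Omega, e_1)$. Its characteristic polynomial is $x^2-\l x - 1$, with roots
\[
x_{\pm} \;=\; \frac{\l \pm \sqrt{\l^2+4}}{2},
\]
matching the supporting atoms of $\r$ in \eqref{monlaw}.

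Next I would diagonalize: an eigenvector for $x_\pm$ is $(1, x_\pm)^T$, so the normalized eigenvectors are $v_\pm = (1+x_\pm^2)^{-1/2}(1,x_\pm)^T$. By the spectral theorem, the vacuum distribution is
\[
\r \;=\; \sum_{\eps\in\{+,-\}} |\langle v_\eps, \Omega\rangle|^2 \,\delta_{x_\eps} \;=\; \sum_{\eps\in\{+,-\}} \frac{1}{1+x_\eps^2}\,\delta_{x_\eps}.
\]
Using $x_\pm^2 = \l x_\pm + 1$ gives $1+x_\pm^2 = 2+\l x_\pm$, and a short rationalization (multiplying by $\sqrt{\l^2+4}\mp\l$) yields the weights $\tfrac{1}{2}\mp\tfrac{\l}{2\sqrt{\l^2+4}}$, which are exactly those in \eqref{monlaw}.

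There is no real obstacle here; the only subtlety worth stressing is the reason $V$ is two-dimensional, namely that on the \emph{monotone} (as opposed to weakly monotone) Fock space the inequality $i>i_1$ is strict, so no tensor $e_1\otimes e_1$ is admitted and the recursion for $P_1(\l)^n\Omega$ terminates. One could equivalently present the computation via the Jacobi/continued-fraction approach used in Theorem \ref{measacRio}: here the associated Jacobi matrix degenerates to the $2\times 2$ block above, giving the terminating continued fraction $\cg_\r(z) = 1/(z-1/(z-\l))=(z-\l)/(z^2-\l z-1)$, whose partial-fraction decomposition reproduces \eqref{monlaw}; I would mention this as a parallel with the weakly monotone case but carry out the diagonalization route since it is the most direct.
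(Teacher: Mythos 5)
Your proposal is correct and follows essentially the same route as the paper: both reduce $P_1(\l)$ to the $2\times 2$ Jacobi matrix $\bigl(\begin{smallmatrix}0&1\\1&\l\end{smallmatrix}\bigr)$ on $\spn\{\Om,e_1\}$, the paper then reading off the measure from the simple poles and residues of the terminating continued fraction $\cg_\r(z)=(z-\l)/(z^2-\l z-1)$, while you diagonalize and compute the spectral weights $|\langle v_\pm,\Om\rangle|^2$ directly. The two finishes are equivalent elementary computations, and you correctly identify the Cauchy-transform variant as the one paralleling Theorem \ref{measacRio}, which is in fact the one the paper uses.
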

\begin{proof}
Fix $\l>0$. The definition of monotone creation and annihilation operators gives the following Jacobi matrix representation for $P_1(\l)$
$$
\left(
  \begin{array}{cc}
    0 & 1 \\
    1 & \l \\
  \end{array}
\right)
$$
w.r.t. the canonical basis. Consequently, as in \cite{Ak} we can write down the Cauchy transform $\mathcal{C}_{1,\l}$ of the vacuum law of $P_1(\l)$
$$
\mathcal{C}_{1,\l}(z)=\frac{1}{z-\frac{1}{z-\l}}=\frac{z-\l}{z^2-\l z -1}\,.
$$
Finally, \eqref{monlaw} follows as $\mathcal{C}_{1,\l}(z)$ has two simple poles in $\frac{\l\pm \sqrt{\l^2+4}}{2}$, and
$$
\text{Res}_{z=z_0}\mathcal{C}_{1,\l}(z)=\frac{1}{2}\mp \frac{\l}{2\sqrt{\l^2+4}}\,,
$$
where $z_0:=\frac{\l\pm \sqrt{\l^2+4}}{2}$.
\end{proof}
Notice that for $n\geq 2$, $g_{1,n}(1)$ is the Fibonacci sequence. For the general case covering the circumstance $m\geq 2$, we have the analogue of Theorem \ref{mommgeq2}. As usual, $g_{m,0}(\l_m)=1$, and $g_{m,1}(\l_m)=0$ for any $m\geq 1$.
\begin{prop}
\label{monmom}
For each $m\geq1$, $n\geq2$ and $\l>0$ one has
\begin{equation}
\label{bm11}
g_{m,n}(\l_m)=\sum_{\eps\in ANC^{m}([m],n)}\l_m^{|\eps|_0}\,,
\end{equation}
where $|\eps|_0$ denotes the number of singletons in the partition $\eps$.
In addition, the following recursion formula holds
\begin{equation}
\label{momm11}
g_{m,n}(\l_m)=\sum_{k=2}^n g_{m,n-k}(\l_m)\sum_{l=1}^m \bigg(g_{m-l+1,k-2}(\l_m)+\a_{k,3}\sum_{j=1}^{k-2}\widetilde{g}_{m-l+1,k-2-j}(\l_m)\bigg)\,,
\end{equation}
where for each $j=1,\ldots, k-2$, $\widetilde{g}_{m-l+1,k-2-j}(\l_m)$ is defined as
$$
g_{m-l+1,k-2-j(\l_m)}\bigg(\a_{[\frac{k-2-j}{2}],1}\sum_{r=1}^{[\frac{k-2-j}{2}]}\binom{r+j}{j}\l_m^j + \d_{[\frac{k-2-j}{2}],0}\l_m^j\bigg)\,.
$$
\end{prop}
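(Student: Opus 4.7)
The plan is to mirror step by step the proof of Theorem \ref{mommgeq2}, adapting the few points where the commutation relations of the monotone Fock space differ from those of the WM-Fock space. As a first move I would expand, for $m\geq1$, $n\geq 2$,
$$
g_{m,n}(\l_m)=\sum_{\eps\in\{1,0,\dag\}^n_{+}}\sum_{i_1,\ldots,i_n=1}^m  \om_{\Om}(a_{i_n}^{\eps(n)}\cdots a_{i_1}^{\eps(1)})\l_m^{|\eps|_0},
$$
exactly as in \eqref{bmn2}, using that the only surviving words are those parametrised by Riordan paths (outer singletons cannot appear since each isolated preservation operator kills the vacuum). Then $\eps=\eps'\sqcup\eps''$ decomposes into a noncrossing pair partition $\eps''$ coming from the creation/annihilation letters and a set of singletons $\eps'$ carrying the preservation letters, so $\eps\in NCI_{2,1}(n)$.

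Next I would establish the monotone analogue of Lemma \ref{lemikil}. Fixing $k:=\min\{j\leq n\mid \eps(j)=1\}$ and $l:=\max\{i<k\mid \eps(i)=\dag\}$, the identities recalled from \cite{CFL1}, namely $a_ja_ka^\dag_k=a^\dag_k$ for $j\leq k$ and $a_ka_ja^\dag_j=\delta_{k)}(j)a_k$, force $i_l=i_{l+1}=\cdots=i_k$, and reduce the word to the form
$$
\delta_{i_l,i_k}\,\delta_{i_l)}(i_{l-1})\, a_{i_n}^{\eps(n)}\cdots a^{\eps(k+1)}_{i_{k+1}}a^{\eps(l-1)}_{i_{l-1}}\cdots a^\dag_{i_1}
$$
(compare \eqref{***}). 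The decisive observation is that the factor $\delta_{i_l)}(i_{l-1})$ now demands the strict inequality $i_{l-1}<i_l$, in contrast with the WM case where $\alpha_{i_l,i_{l-1}}$ only requires $i_{l-1}\le i_l$. Iterating, the four elementary cases analogous to those in the proof of Theorem \ref{mommgeq2} become
$$
a^0_{i_{l-1}}a^\dag_{i_{l-2}}=\delta_{i_{l-1},i_{l-2}}a^\dag_{i_{l-2}},\quad a^\dag_{i_{l-1}}a^\dag_{i_{l-2}}=\delta_{i_{l-1})}(i_{l-2})a^\dag_{i_{l-1}}a^\dag_{i_{l-2}},
$$
$$
a^0_{i_{l-1}}a^0_{i_{l-2}}=\delta_{i_{l-1},i_{l-2}}a^0_{i_{l-1}},\quad a^\dag_{i_{l-1}}a^0_{i_{l-2}}=\delta_{i_{l-1})}(i_{l-2})a^\dag_{i_{l-1}}a^0_{i_{l-2}}.
$$
Reading these off, any nonzero word forces exactly the \emph{strict} monotone ordering between nested blocks of cardinality two, while a singleton keeps the label of the block covering it (the $NCLP_s$ condition). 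This is precisely the combinatorial content of $ANC^{m}([m],n)$, and yields \eqref{bm11} after noting that the surviving words contribute $1$ to $\omega_\Omega$.

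For the recursion \eqref{momm11}, the argument of Theorem \ref{mommgeq2} transfers verbatim: decompose $\eps\in ANC^{m}([m],n)$ into its first irreducible factor of length $k$ (for $k=2,\ldots,n$) followed by an element of $ANC^{m}([m],n-k)$; then label the outermost block of the irreducible factor by $l\in[m]$ and use the strict monotone order on nested pair blocks, which identifies the inside with a partition in $ANC^{m}([l,m],k-2-j)\cong ANC^{m}([m-l+1],k-2-j)$, where $j$ counts the singletons placed on the outer level. The combinatorial factor $\binom{r+j}{j}$ in $\widetilde{g}_{m-l+1,k-2-j}(\l_m)$ comes from distributing the $j$ indistinguishable outer singletons into the $r+1$ gaps produced by the $r$ outer pair blocks, exactly as in the WM case. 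The main technical obstacle is the one already described: checking that the strict inequality imposed by $\delta_{i_l)}(i_{l-1})$ in the monotone commutation relations translates faithfully into the strict monotone labeling condition on nested pair blocks (and only on those), while singletons continue to inherit the label of the covering block. Once this correspondence is in place, the enumeration in the recursion is formally identical to the one carried out for Theorem \ref{mommgeq2}.
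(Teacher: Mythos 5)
Your proposal is correct and follows essentially the same route as the paper: the paper's own proof simply observes that \eqref{bm11} and \eqref{momm11} follow from \eqref{bm4} and \eqref{momm} once one notes that the monotone relations \eqref{comrul} replace the weak inequality $\a_{i_l,i_{l-1}}$ by the strict one $\d_{i_l)}(i_{l-1})$ on nested pair blocks, thereby cutting $ANC^{wm}([m],n)$ down to $ANC^{m}([m],n)$. You spell out that reduction in full detail (the monotone analogue of Lemma \ref{lemikil}, the four elementary cases, and the unchanged recursion), but the underlying argument is the one in the paper.
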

\begin{proof}
Notice that \eqref{bm11} and \eqref{momm11} directly follow from \eqref{bm4} and \eqref{momm}, respectively. In fact, as a consequence of \eqref{comrul}, for any $n$ the nonvanishing partitions are those $\pi:=\{B_1,\ldots,B_p\}\in ANC^{wm}([m],n)$ for which $L(B_i)<L(B_j)$ when $|B_i|=|B_j|=2$, and $\min B_i<\min B_j<\max B_j<\max B_i$, \emph{i.e.} the partitions belonging to $ANC^{m}([m],n)$.
\end{proof}
In the next lines we show that the rescaled sums of nonsymmetric position operators in the monotone case weakly converge in the vacuum state to the same limit distribution $\nu_\l$ found in the weakly monotone case. This exactly reflects what occurs in the symmetric case, where the standard arcsine law is the central limit distribution for position operators both in monotone \cite{Mur} and in the weakly monotone \cite{CGW} cases.
\begin{thm}
\label{monoclt}
For any $\l>0$, the vacuum law of $\displaystyle \frac{P_m(\l_m)}{\sqrt{m}}$
weakly converges for $m\rightarrow \infty$ to the distribution $\nu_\l$ given in \eqref{cc} and \eqref{ccc}.
\end{thm}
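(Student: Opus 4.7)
The plan is to imitate the proof of Theorem \ref{clt}, with the combinatorial identity of Theorem \ref{mommgeq2} replaced by its monotone counterpart from Proposition \ref{monmom}. Since each $P_m(\l_m)$ is bounded and selfadjoint and the candidate limit $\n_\l$ has compact support (hence is determined by its moments), weak convergence reduces to establishing the convergence of moments
\begin{equation*}
\lim_{m\to\infty}\frac{1}{\sqrt{m^n}}\,g_{m,n}(\l_m)=\int_{\br}x^n\,d\n_\l(x),\qquad n\geq 2.
\end{equation*}
I would begin from $g_{m,n}(\l_m)=\sum_{\eps\in ANC^{m}([m],n)}\l_m^{|\eps|_0}$ and split each $\eps$ into its $j$ irreducible factors of lengths $q_h$, indexed by compositions of $n$, thereby reproducing the double-sum factorisation of \eqref{cl2} with $ANC^{wm}$ replaced by $ANC^{m}$.

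The heart of the argument is that the monotone and the weakly monotone labelings yield the same asymptotic contribution for each irreducible factor. Following the proof of Theorem \ref{clt}, I would parametrise the labelings on a factor with $k_h+1$ pair blocks (and $q_h-2-2k_h$ singletons, which in both settings inherit the label of the block they cover) by the cardinality $l_h$ of the range of the restriction of the label function to the pair blocks. The binomial coefficient $\binom{m}{l_h}\sim m^{l_h}/l_h!$ and the factor $\l_m^{q_h-2-2k_h}=\l^{q_h-2-2k_h}m^{q_h/2-1-k_h}$, combined with the overall normalisation $m^{-n/2}$ and $n=q_1+\cdots+q_j$, produce a total exponent of $m$ equal to $\sum_{h}(l_h-1-k_h)$, which is non-positive and vanishes precisely when $l_h=k_h+1$ for every $h$. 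Hence only the maximal-range terms survive in the limit; in those terms all pair blocks carry \emph{distinct} labels, so the strict inequality $L(B_i)<L(B_j)$ on nested pair blocks -- the only feature distinguishing the monotone rule from the weakly monotone one -- is automatically satisfied and imposes no additional constraint. Consequently the count of surviving irreducible configurations is again $|\ca_{q_h,k_h+1}|=(2k_h-1)!!$, exactly as in Theorem \ref{clt}.

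From this point onward the computation transfers verbatim: using $A_k/(k+1)=C_k/2^k$, the identity $\sum_{r\geq k}\binom{r}{k}z^r=z^k/(1-z)^{k+1}$, and the moment generating function of the standard Wigner law, one recovers \eqref{mgfnu}, and then the Cauchy transform of the free Meixner law with parameters $(\l,1/2,2,0)$, namely $\n_\l$. The main delicate point, and the only place where the two settings could in principle diverge, is the suppression of the non-maximal-range terms: the monotone labeling is strictly more permissive than the weakly monotone one when $l_h<k_h+1$, because it still allows equal labels on non-nested pair blocks of size $2$; but this extra combinatorial freedom is harmless, since the exponent $l_h-1-k_h$ is already strictly negative and kills the contribution under the normalisation. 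I would verify this bound explicitly -- it is essentially the only step that is not a transcription of the weakly monotone proof -- after which the identification of the weak limit with $\n_\l$ is immediate.
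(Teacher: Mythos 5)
Your proposal is correct and follows essentially the same route as the paper, which likewise reduces Theorem \ref{monoclt} to the argument of Theorem \ref{clt} via Proposition \ref{monmom}, observing that the surviving $l=k_h+1$ terms involve the same partitions for monotone and weakly monotone label functions while the non-maximal-range terms are killed by the normalisation. One minor slip: since $ANC^{m}([m],n)\subseteq ANC^{wm}([m],n)$, the monotone labeling is \emph{less} permissive than the weakly monotone one, not more; this only makes the suppression of the $l_h<k_h+1$ terms easier and does not affect your argument.
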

\begin{proof}
Indeed, one obtains the same result of Theorem \ref{clt}. This is achieved using the arguments developed in its proof, and taking into account that:

\medskip

\noindent (1) the labels for singletons in the monotone and weakly monotone cases satisfy the same bounds,

\medskip

\noindent (2) under the notations introduced in the proof of Theorem \ref{clt}, when $l=k_h+1$, for $h=1,\ldots, j$ and $j=1,\dots, [\frac{n}{2}]$, the same partitions are involved for both weakly monotone and monotone label functions.
\end{proof}

\section*{Data availability statement}
Data sharing not applicable to this article as no datasets were generated or analysed during the current study.

\section*{Acknowledgements}
\noindent V. Crismale and M. E. Griseta kindly acknowledge the support of the italian INDAM-GNAMPA and Fondi di Ateneo Universit\`a di Bari ``Probabilit\`a Quantistica e Applicazioni''. V. Crismale also aknowledges  the FFABR project of italian MIUR. J. Wysocza\'nski kindly aknowledges the support of the Polish National Center for Science grant 2016/21/B/ST1/00628.

\end{document}